\date{}
\title{\vspace{-0.8cm}A random triadic process }
\author{
D\'aniel Kor\'andi \thanks{Department of Mathematics, ETH, 8092 Zurich. Email: daniel.korandi@math.ethz.ch.}
\and
Yuval Peled \thanks{School of Computer Science and Engineering, The Hebrew University of Jerusalem, Jerusalem, Israel. Email: yuvalp@cs.huji.ac.il. Yuval Peled is grateful to the Azrieli Foundation for the award of an Azrieli Fellowship.}
\and
Benny Sudakov \thanks{Department of Mathematics, ETH, 8092 Zurich.
Email: benjamin.sudakov@math.ethz.ch. 
Research supported in part by SNSF grant 200021-149111.}
}
\theoremstyle{plain}
\newtheorem{THM}{Theorem}[section]
\newtheorem*{THM*}{Theorem}
\newtheorem{PROP}[THM]{Proposition}
\newtheorem{LEMMA}[THM]{Lemma}
\newtheorem{COR}[THM]{Corollary}
\newtheorem{CLAIM}[THM]{Claim}
\theoremstyle{definition}
\newcommand{\Prb}{\mathbf{P}}
\newcommand{\Exp}{\mathbf{E}}
\newcommand{\subs}{\subseteq}
\newcommand{\eps}{\varepsilon}
\newcommand{\Bin}{\textrm{Bin}}
\newcommand{\mB}{\mathcal{B}}
\newcommand{\mG}{\mathcal{G}}
\newcommand{\polylog}{\textrm{polylog }}
\begin{document}
\maketitle

\begin{abstract}
Given a random 3-uniform hypergraph $H=H(n,p)$ on $n$ vertices where each triple independently appears with probability $p$, consider the following {\em graph} process. We start with the star $G_0$ on the same vertex set, containing all the edges incident to some vertex $v_0$, and repeatedly add an edge $xy$ if there is a vertex $z$ such that $xz$ and $zy$ are already in the graph and $xzy \in H$. We say that the process {\em propagates} if it reaches the complete graph before it terminates. In this paper we prove that the threshold probability for propagation is $p=\frac{1}{2\sqrt{n}}$. We conclude that $p=\frac{1}{2\sqrt{n}}$ is an upper bound for the threshold probability that a random 2-dimensional simplicial complex is simply connected.
\end{abstract}

\section{Introduction}

The principle of {\em triadic closure} is an important concept in social network theory (see e.g. \cite{EKBOOK}). Roughly speaking, it says that when new friendships are formed in a social network, it is more likely to occur between two people sharing a common friend, thus ``closing'' a triangle, than elsewhere. We will consider a simplistic model of the evolution of a social network, where friendships can only be formed through a common friend, and triadic closure eventually occurs at any triangle with probability $p$, independently of other triangles. We refer to this process as the {\em triadic process}. 

Formally, let $H=H(n,p)$ be a random 3-uniform hypergraph on $[n]$ where each triple independently appears with probability $p$. The triadic process is the following {\em graph} process. We start with the star $G_0$ on the same vertex set $[n]$, containing all the edges incident to some vertex $v_0$, and repeatedly add any edge $xy$ if there is a vertex $z$ such that $xz$ and $zy$ are already in the graph and $xzy \in H$. We say that the process {\em propagates} if all the edges are added to the graph eventually. It is easy to see that this event does not depend on the order the edges are added in. In this paper we prove that the threshold probability for propagation is $\frac{1}{2\sqrt{n}}$.

\begin{THM}\label{thm:main}
Suppose $p=\frac{c}{\sqrt{n}}$, for some constant $c>0$. Then,
\begin{enumerate}
\item If $c>\frac 12$, then the triadic process propagates whp.
\item If $c<\frac 12$, then the triadic process stops at $O(n\sqrt{n})$ edges whp.
\end{enumerate}
\end{THM}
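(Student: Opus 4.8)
Let $v_0$ be the centre of the star and put $N=n-1$. Every edge at $v_0$ is present throughout, so for $x,y\neq v_0$ the move ``add $xy$ with middle vertex $v_0$'' is available exactly when $\{v_0,x,y\}\in H$, i.e.\ when $xy$ lies in the link $\Gamma_0:=\{\,xy:\{v_0,x,y\}\in H\,\}$; any other middle vertex uses a triple disjoint from $v_0$. Thus the final graph is the star together with the graph $\Gamma_\infty$ obtained on $[n]\setminus\{v_0\}$ by running the triadic process from $\Gamma_0$ using the triples of $H$ that avoid $v_0$. Here $\Gamma_0\sim G(N,p)$ and is independent of the $3$-graph induced by $H$ on $[n]\setminus\{v_0\}$, which is distributed as $H(N,p)$; and the process propagates iff $\Gamma_\infty=K_N$. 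So it suffices to analyse this self-contained process. The bookkeeping device is a \emph{derivation tree} of a pair $e=xy$: a rooted binary tree with root $e$ in which every internal node is a pair $uv$ with a designated middle vertex $z$ and children $uz,zv$, and every leaf is a pair; it is \emph{valid} if each leaf pair lies in $\Gamma_0$ and each internal triple $\{u,z,v\}$ lies in $H$. An easy induction on size shows that $e\in\Gamma_\infty$ if and only if $e$ admits a valid derivation tree.

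\textbf{Part 2 ($c<\tfrac12$).} I would bound $\Exp\,|\Gamma_\infty|$ by a first moment over derivation trees. In a \emph{reduced} tree (all triples occurring in it pairwise distinct) with $k$ internal nodes the $k$ internal triples avoid $v_0$ and the $k+1$ leaf triples pass through $v_0$, so they are automatically distinct from each other across the two types, and the tree is valid with probability exactly $p^{2k+1}$; a short argument (pass to a minimal‑size valid derivation and prune, and observe that non‑reduced trees contribute only $O(n^{-1})$) shows it is enough to sum over reduced trees. With $C_k\le 4^k$ shapes and $\le N^k$ choices of middle vertices,
\[
\Prb\!\left[e\in\Gamma_\infty\right]\;\le\;\sum_{k\ge0}C_k\,N^{k}\,p^{2k+1}\;\le\;p\sum_{k\ge0}\bigl(4Np^{2}\bigr)^{k}\;=\;p\sum_{k\ge0}\bigl(4c^{2}\bigr)^{k}\;=\;\frac{p}{1-4c^{2}}\;=\;O\!\left(n^{-1/2}\right),
\]
the geometric series converging precisely because $4c^{2}<1$, i.e.\ $c<\tfrac12$. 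Hence $\Exp\,|\Gamma_\infty|\le\binom{N}{2}\cdot O(n^{-1/2})=O(n^{3/2})$, which is of the right order since already $|\Gamma_0|=\Theta(n^{3/2})$. To upgrade this to a whp bound I would use the same tree estimate to control the second moment — $\Prb[e,e'\in\Gamma_\infty]$ is close to the product $\Prb[e]\Prb[e']$ unless $e,e'$ admit derivations sharing a triple, and the contribution of these ``close'' pairs is lower order — so Chebyshev gives $|\Gamma_\infty|=O(n^{3/2})$ whp; alternatively one runs the process in rounds and shows the number of newly addable pairs contracts by a factor tending to $4c^2<1$.

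\textbf{Part 1 ($c>\tfrac12$).} Now $4c^{2}>1$, the derivation branching process is supercritical, and one expects propagation. I would reserve a small independent part $H''$ of $H$ (so that $H\setminus H''$ is still distributed as $H(n,p')$ with $c'>\tfrac12$) and first run the process from the link of $v_0$ using only $H\setminus H''$. Two things must be shown. \emph{(Escape.)} This run reaches, whp, a configuration from which completion is forced: concretely, a vertex set $U$ of size $\Omega(n)$ whose induced graph is ``quasi‑complete'' (say of minimum degree $\ge 0.99|U|$). This is where the supercritical branching is used — a truncated first‑ and second‑moment estimate shows a single pair is derived with probability bounded below, hence a constant fraction of all $\binom n2$ pairs are derived, and a concentration argument, together with the fact that $\Gamma_0\supseteq G(N,p')$ is quasirandom and the newly added edges are ``spread out'', locates the piece $U$ and densifies it. \emph{(Completion.)} Given such a $U$, I would sprinkle the reserved $H''$ in $O(1)$ rounds: every non‑edge of $U$ has $\Omega(n)$ common neighbours in $U$, so it closes with probability $1-e^{-\Omega(\sqrt n)}$ and a union bound yields $K_U\subseteq\Gamma_\infty$; then each remaining vertex, having $\Omega(\sqrt n)$ neighbours into $U$ (since $\Gamma_0\supseteq G(N,p')$), first acquires $\Omega(n)$ neighbours in $U$ and then, in one further round, gets joined completely to $U$ and to the other outside vertices. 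As only triples of $H''$ avoiding $v_0$ — independent of everything used before — are needed here, this is legitimate, so $\Gamma_\infty=K_N$ whp and the process propagates.

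\textbf{The main obstacle.} The hard part is the escape step of Part 1. While the graph is still sparse ($o(n^2)$ edges), the quantities governing the process (numbers of newly created cherries, common neighbourhoods of pairs, local degree profiles) are only weakly concentrated, and the survival/second‑moment analysis of the supercritical derivation process must be done in the presence of genuine dependencies — different edges' derivations share triples of $H$, so the process is not literally a branching process — while still proving that it ``takes off'' for every $c>\tfrac12$, thereby matching the first‑moment threshold $4Np^2=1$ exactly. A secondary, more routine, technical point is the passage from arbitrary valid derivations, which are naturally DAGs possibly repeating triples, to the reduced trees underlying the clean $p^{2k+1}$ bound in Part 2.
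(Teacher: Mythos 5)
Your reduction to the link of $v_0$ is correct, and the derivation--tree reformulation gives a genuinely different route to the lower bound than the paper's. For Part 2 the paper first runs the differential-equation phase to a time $T$ with $f(T)=\eps/2$ and all degrees $\le(1-\delta)\sqrt n$, then switches to rounds and shows the number of open triples contracts by a factor $2c<1$ per round. Your first-moment sum over derivation trees gets there more directly, and makes transparent that $c=1/2$ is exactly where the branching mean $4Np^2$ hits $1$. Two caveats, both fixable but not quite as stated. First, Markov on $\Exp|\Gamma_\infty|$ alone does not yield ``stops at $O(n\sqrt n)$ edges whp'' (only $\le \omega(n)\, n^{3/2}$ for any $\omega\to\infty$), so the second-moment or rounds argument you allude to is genuinely needed. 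Second, the passage to reduced trees is subtler than ``prune a minimal derivation'': a minimal derivation tree does have pairwise distinct \emph{internal} triples (a case analysis on two nodes sharing a triple always produces a strictly smaller valid tree), but it may well repeat \emph{leaf} pairs, e.g.\ root $xy$ with middle $z$ and both $xz,zy$ derived through the same middle $w$, giving the repeated leaf $wz$. One then has to bound separately the contribution of middle-vertex choices satisfying such coincidences; each coincidence removes one vertex degree of freedom but also one factor of $p$, so the correction is $O(\polylog n/\sqrt n)$ times the main term, not $O(1/n)$. On balance your Part 2 is a nice, more elementary alternative once these details are supplied.

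Part 1 is where the proposal has a real gap, and it lands precisely on what the paper's differential-equation machinery exists to do. Your escape step asserts that a truncated first- and second-moment analysis of the supercritical derivation process, together with quasirandomness of $\Gamma_0$ and an unspecified concentration argument, produces a linear quasi-complete set $U$. None of this is carried out and none of it is routine. In a genuine supercritical Galton--Watson process a fixed pair survives only with some probability in $(0,1)$; here the derivations of different pairs share triples of $H$ and, more fundamentally, which triples are \emph{open} at step $i$ depends on the entire history, so the process is not a branching process and the second moment does not factor. This must be controlled uniformly for all $c>\tfrac12$, i.e.\ arbitrarily close to criticality, where fluctuations in the number of open triples at a vertex (the paper's $F_v$) are exactly what could make the process die early. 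The paper avoids the branching-process viewpoint entirely: it tracks $D_v,F_v,X_{u,v},Y_{u,v},Z_{u,v}$ along the ODE trajectories for $Tn^2$ steps with $T=\sqrt{\log n}$ (possible because $f(t)\ge 1-\tfrac1{4c^2}>0$ for $c>\tfrac12$), and only after that, when codegrees are $\Theta(\log n)$ and $Z_{u,v}=\Theta(n\log^2 n)$, does it run the $O(\log\log n)$-round bootstrapping that corresponds to your completion step. Without a substitute for this global tracking through the sparse regime, your phrase ``a concentration argument\dots\ locates the piece $U$ and densifies it'' has no proof behind it, and Part 1 of the proposal is incomplete.
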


As usual, we say that some property holds {\em with high probability} or {\em whp} if it holds with probability tending to 1 as $n$ tends to infinity.

\medskip
Randomized graph processes have been intensively studied in the past decades. One notable example is the triangle-free process, originally motivated by the study of the Ramsey number $R(3,n)$ (see e.g. \cite{ESW95}). In this process the edges are added one by one at random as long as they do not create a triangle in the graph. The triadic process is a slight variant of this, with a very similar nature. Indeed, our analysis makes good use of the tools developed by Bohman \cite{B09} when he applied the differential equation method to track the triangle-free process. Several other related processes were also analyzed using differential equations, e.g. \cite{BFL14}. For more information about this method we refer the interested reader to the excellent survey of Wormald \cite{W99}.

Coja-Oghlan, Onsj\"o and Watanabe \cite{COW12} investigated a similar kind of closure while analyzing connectivity properties of random hypergraphs. They say that a 3-hypergraph is propagation connected if its vertices can be ordered in some way  $v_1,\ldots,v_n$ so that each $v_i$ ($i\ge 3$) forms a hyperedge with two preceding vertices. They obtain the threshold probability for the propagation connectivity of $H(n,p)$ up to a small multiplicative constant. Using this directed notion of connectivity, our problem asks when the random 3-hypergraph on the {\em line graph} of $K_n$ is propagation connected from the star.

\medskip

Our main motivation for considering the triadic process comes from the theory of random 2-dimensional simplicial complexes. A simplicial 2-complex on the vertex set $V$ is a set family $Y \subs \binom{V}{\le 3}$ closed under taking subsets. The dimension of a simplex $\sigma\in Y$ is defined to be $|\sigma|-1$. We use the terms vertices, edges and faces for 0, 1 and 2-dimensional simplices, respectively. The 1-skeleton of a 2-complex is the subcomplex containing its vertices and edges.

The Linial--Meshulam model of random simplicial complexes, introduced in \cite{LM06}, is a generalization of the Erd\H{o}s--R\'enyi random graph model and has been studied extensively in recent years. The random 2-complex  $Y_2(n,p)$ is defined to have the complete 1-skeleton, i.e., all vertices and edges, and each of the faces independently with probability $p$. The study of random complexes involves both topological invariants and combinatorial properties, including homology groups, homotopy groups, collapsibility, embeddability and spectral properties.

One of the oldest questions of this kind, asked by Linial and Meshulam \cite{LM06}, is how the fundamental group $\pi_1(Y_2(n,p))$ of the random 2-complex behaves. Babson, Hoffman and Kahle \cite{BHK11} showed that if $p<n^{-\alpha}$ for some arbitrary $\alpha>1/2$ then the fundamental group is nontrivial whp. On the other hand, they proved that $\pi_1(Y_2(n,p))$ is trivial for $p>\sqrt{4\log n/n}$, which means that the threshold probability for being simply connected should be close to $n^{-1/2}$. As a corollary of the first part of Theorem~\ref{thm:main}, we improve the upper bound on the threshold probability.

\begin{COR} \label{cor:top}
Let $p=\frac{c}{\sqrt{n}}$ for some constant $c>\frac{1}{2}$. Then $Y_2(n,p)$ is simply connected whp.
\end{COR}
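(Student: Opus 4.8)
The plan is to deduce Corollary~\ref{cor:top} directly from the first part of Theorem~\ref{thm:main} by translating propagation of the triadic process into a statement about the fundamental group. First I would couple the two models: since the face set of $Y_2(n,p)$ is exactly a random $3$-uniform hypergraph in which each triple is present independently with probability $p$, I may take it to be the hypergraph $H$ driving the triadic process, with an arbitrary fixed base vertex $v_0$. By Theorem~\ref{thm:main}(1), when $c>\tfrac12$ the triadic process on this $H$ propagates whp, i.e.\ its graph eventually becomes the complete graph $K_n$. It then remains to show, deterministically, that propagation from $v_0$ forces $\pi_1(Y_2(n,p))=1$.

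The core step is the following claim, proved by induction on the order in which the triadic process adds its edges: whenever $xy$ belongs to the triadic graph, the edge-path $x\to y$ is homotopic rel endpoints, within the $1$-skeleton of $Y_2(n,p)$, to the two-edge path $x\to v_0\to y$. For the base case, the edges of the initial star $G_0$ are exactly the $v_0x$, for which the assertion is trivially true (the path $v_0\to x$ equals $v_0\to v_0\to x$). For the inductive step, suppose $xy$ is added because $xz$ and $zy$ are already present and $\{x,z,y\}\in H$. Then $\{x,z,y\}$ is a face of $Y_2(n,p)$, so $x\to y\simeq x\to z\to y$; applying the inductive hypothesis to $xz$ and to $zy$ gives $x\to z\to y\simeq (x\to v_0\to z)(z\to v_0\to y)$, and the backtrack $v_0\to z\to v_0$ cancels, leaving $x\to y\simeq x\to v_0\to y$, as desired.

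Finally I would assemble these local homotopies into a trivialization of an arbitrary loop. Since the process propagates, the claim holds for every edge of $K_n$. Take any closed walk $v_0=u_0\to u_1\to\cdots\to u_k=v_0$ in the $1$-skeleton; replacing each step $u_i\to u_{i+1}$ by $u_i\to v_0\to u_{i+1}$ yields a homotopic loop, and then every returning-and-leaving pair $v_0\to u_i\to v_0$ is a backtrack and contracts. What remains is the constant loop at $v_0$, so the original loop is null-homotopic; as the $1$-skeleton of $Y_2(n,p)$ is connected (indeed complete), this proves $\pi_1(Y_2(n,p))=1$.

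There is no real probabilistic obstacle here beyond Theorem~\ref{thm:main} itself; the only thing to watch is the homotopy bookkeeping in the induction — in particular keeping track of the orientations of the edge-paths and checking that the relevant triple is genuinely a $2$-face (which holds because $z\neq x,y$, since $xz$ and $zy$ are edges, so $\{x,z,y\}$ has three distinct elements). This is routine, so the corollary follows quickly once Theorem~\ref{thm:main}(1) is in hand.
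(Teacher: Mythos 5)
Your proof is correct, and it reaches the same conclusion (propagation $\Rightarrow$ simple connectivity) by a more hands-on route than the paper's. The paper packages the deduction via collapsibility: it extracts from $Y_2(n,p)$ the subcomplex $C$ spanned by the faces actually used by the process, observes that running the triadic process in reverse is a sequence of elementary collapses (each removing a face together with a free edge, which is a deformation retraction) terminating in the initial star, concludes $C$ is a ``collapsible hypertree'' and hence simply connected, and then invokes the standard fact that attaching further 2-cells to a simply connected complex keeps it simply connected. Your argument unpacks exactly this induction at the level of edge paths: each collapse is the homotopy $x\to y \simeq x\to z\to y$ supplied by the face $\{x,z,y\}$, and your auxiliary claim that every edge $x\to y$ is homotopic rel endpoints to $x\to v_0\to y$ is precisely what the collapse sequence encodes, with the reduction of an arbitrary loop to the constant loop at $v_0$ following by backtrack cancellation. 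The content is the same; yours avoids the collapsibility vocabulary and the auxiliary subcomplex $C$ at the price of explicit path-homotopy bookkeeping, while the paper's version is a bit more modular and makes the homotopy equivalences quotable as standard facts. One slip of phrasing you should fix: your inductive claim should say ``homotopic rel endpoints within $Y_2(n,p)$'' (or within the subcomplex of used faces), not ``within the $1$-skeleton''---the homotopies necessarily pass through the $2$-faces, and the $1$-skeleton $K_n$ is of course not simply connected.
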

\begin{proof}
Suppose we have a 2-complex $C$ such that one of its edges, $e$, is contained in a unique face $f$. Then we can {\em collapse} $f$ onto the other two edges without changing the fundamental group of $C$. In fact, $C-f-e$ is homotopy equivalent to $C$, the former complex being a deformation retract of the latter. We say that a 2-complex with complete 1-skeleton is a {\em collapsible hypertree} if we can apply a sequence of collapses to it and end up with a tree. Clearly, a collapsible hypertree has trivial fundamental group.

Now observe that Theorem~\ref{thm:main} implies that $Y_2(n,p)$ contains a collapsible hypertree whp. Indeed, if the process propagates, then take $C$ to be the subcomplex of the faces that correspond to the triples we used to add edges to the graph. Then by definition, the reverse of the triadic process on $C$ is exactly a sequence of collapses resulting in a star.

Basic results about the topology of complexes tell us that the addition of faces to a simply connected complex does not change the fundamental group, hence $\pi_1(Y_2(n,p))$ is trivial whp.
\end{proof}

\subsection{Proof outline}

Instead of exposing all the triples at once, we will be sampling them on the fly, trying to extend the edge set of the graph. Both the proofs of the upper bound and the lower bound consist of two phases. 
In the first phase we make one step at a time: we choose, uniformly at random, one (yet unsampled) triple spanning exactly two edges and expose it. With probability $p$ the triple is selected, hence we can add the third edge to our edge set.
The second phase proceeds in rounds: we simultaneously expose all the unsampled triples spanning two edges, and extend the edge set according to the outcome.

The essence of the proof is to track the behavior of certain variables throughout the process. As we will see, this is not a very hard task to do in the second phase, using standard measure concentration inequalities. However, during the initial phase of the process, the codegrees (one of the variables we track) are not concentrated, which forces us to do a more careful analysis of the beginning of the process. For this we will use the differential equation method.
\medskip

We organize the rest of the paper as follows. In Section~\ref{sec:diff} we give an overview of how we apply the differential equation method. A detailed analysis of the actual implementation follows in Section~\ref{sec:calc}. We move on to the second phase of the process in Section~\ref{sec:phasetwo}, thereby completing the proof of Theorem~\ref{thm:main}. We finish the paper with some further remarks in Section~\ref{sec:last}.

Notations: Throughout the paper, we will omit floor and ceiling signs whenever they are not necessary. The sign $\pm$ will be used to represent both a two-element set of values and a whole interval, but it should be clear from the context which one is the case.

\section{The differential equation method} \label{sec:diff}

At any point in the process, we say that a vertex triple $\{u,v,w\}$ is {\em open} if it spans exactly two edges but has not yet been sampled. We will also use the notation $uvw$ for an open triple with edges $uv$ and $vw$. By an {\em open triple at $u$}, we mean a triple $uvw$, i.e., one that has its missing edge adjacent to the vertex $u$.

In each step, our process picks an open triple uniformly at random and samples it. If the answer is positive then we close the triple by adding the missing edge to the graph.
To analyze this process we apply the differential equation method, using some ideas from \cite{B09}. 

\medskip

For simplicity, let us denote the graph we obtain after $i$ samples by $G_i$. We consider the following random variables: $D_v(i)$ is the degree of the vertex $v$ in $G_i$. $F_v(i)$ is the number of open triples at $v$, so it is the number of ways for $v$ to gain a new incident edge in $G_{i+1}$. $X_{u,v}(i)$ is the codegree of $u$ and $v$, i.e., the number of common neighbors of $u$ and $v$ in $G_i$.

To provide some insight, we first heuristically describe the process. Let us assume for now that the $D_v(i)$ are concentrated around some value $D(i)$, and similarly the $F_v(i)$ are approximately equal to some value $F(i)$. We further assume that the variables are very close to their expectations.

In step $i+1$ we choose an open triple uniformly at random, so each triple is chosen with probability $\frac{2}{\sum_v F_v(i)}\approx \frac{2}{nF(i)}$, and then sample it. With probability $p$ the sample is successful, hence we can close the triple. As the number of open triples at a vertex $v$ is about $F(i)$, the change in the degree of a vertex $v$ we expect to see is
\[ D(i+1)-D(i)\approx \frac{2p}{n}. \]

Now let us see how $F_v(i)$ is affected by a step. We gain open triples at $v$ either if we successfully sample one of them (adding the edge $vw$), in which case new open triples are formed with the neighbors of $w$, or if we successfully sample a triple at some neighbor of $v$. On the other hand, we lose the sampled triple regardless of the outcome. The probability of sampling an open triple at some specific vertex $w$ is $\frac{2F_w(i)}{\sum_v F_v(i)}\approx \frac{2}{n}$, so assuming all the codegrees are negligible compared to $D(i)$, the expected change is 
\[ F(i+1)-F(i)\approx \frac{2}{n}(2pD(i)-1). \]

To smooth out this discrete process, we introduce a continuous variable $t$ and say that step $i$ corresponds to time $t=t_i=\frac{i}{n^2}$. 
Let us also rescale $D$ and $F$ by considering the smooth functions $d$ and $f$ in $t$, where we want $d(t)$ to be approximately $D(i)/\sqrt{n}$ and $f(t)$ to be approximately $F(i)/n$. Note that, since $p=c/\sqrt{n}$, our assumptions so far suggest the following behavior:
\[ d'(t) \approx \frac{d(t+1/n^2)-d(t)}{1/n^2} \approx n^{3/2}(D(i+1)-D(i)) \approx 2c \]
and
\[ f'(t) \approx \frac{f(t+1/n^2)-f(t)}{1/n^2} \approx n(F(i+1)-F(i)) \approx 4cd(t)-2.\]

Let us emphasize that this little musing that we are presenting here is not a proof at all --- a detailed analysis and the proof of concentration will follow in Section~\ref{sec:calc}. However, it at least indicates why it is plausible to believe that the actual values of $D_v(i)$ and $F_v(i)$ follow the trajectories of $d$ and $f$ given by the system of differential equations $d'(t)=2c$ and $f'(t)=4cd(t)-2$.

\bigskip

In the previous paragraphs we made the assumption that the codegrees are negligible compared to the degrees, but since they are not concentrated, proving this still needs some thought. To this end, we introduce two more random variables. $Y_{u,v}(i)$ denotes the number of {\em open 3-walks} $uww'v$ from $u$ to $v$, i.e., 3-walks where we require that $uww'$ be open (but allowing $w=v$), and $Z_{u,v}(i)$ is the number of {\em open 4-walks} $uww'w''v$ (again, allowing vertex repetitions), where both $uww'$ and $w'w''v$ are open. Note that $Y_{u,v}$ is {\em not} symmetric in $u$ and $v$.

The point is that $Y_{u,v}$ and $Z_{u,v}$ are concentrated (as we will see in Section \ref{sec:calc}), and --- amazingly enough --- their one-step behavior can be described with fairly simple formulas. So let us continue with our thought experiment and assume that all $Y_{u,v}(i)\approx Y(i)$, all $Z_{u,v}(i)\approx Z(i)$, and all variables are close to their expectations.

First of all, the increase in the codegrees comes from a successful sample in a 3-walk, so we expect 
\[ X_{u,v}(i+1)-X_{u,v}(i) \approx \frac{2p(Y_{u,v}(i)+Y_{v,u}(i))}{nF(i)} \approx \frac{4cy(t)}{n^2 f(t)}. \]
This will be enough to prove a uniform $O(\log n)$ upper bound over all the codegrees, so we can keep ignoring the effect of $X$ in the next few paragraphs.

Let us look at the change in $Y_{u,v}(i)$. There are three different ways a new open 3-walk $uww'v$ can appear after step $i+1$, depending on which one of $uw,ww'$ and $w'v$ is the new edge. When $uw$ is the new edge, there is a 4-walk $utww'v$ in $G_i$ where $utw$ is open. We can count such configurations by first choosing $w'$ as a neighbor of $v$ and then choosing an open 3-walk $utww'$. Note that for any such choice, $uww'$ will be an open triple in $G_{i+1}$, except if $w'$ is the same as $u$, or a common neighbor of $u$ and $w$. The latter cases are negligible, so there are about $Y(i)D(i)$ possibilities in this case.

Similarly, when $ww'$ is the new edge, new 3-walks come from 4-walks $uwtw'v$, and we can count the number of options by first choosing $w$ as a neighbor of $u$ and then an open 3-walk from $w$ to $v$. Again, the triple $uww'$ will be open in $G_{i+1}$ if $w'$ is neither $u$, nor a common neighbor of $u$ and $v$, so we find $Y(i)D(i)$ possibilities of this type. Finally, $w'v$ can only be the new edge if $w'tv$ was successfully sampled in some open 4-walk $uww'tv$, so there are about $Z(i)$ such options.

On the other hand, we lose an open 3-walk if we sample its open triple, whether or not the sample is successful. As any particular triple is chosen with probability about $\frac{2}{nF(i)}$, this means that we expect to see
\[ Y(i+1)-Y(i) \approx \frac{2}{nF(i)} \Big( p\big(2Y(i)D(i)+Z(i)\big) - Y(i) \Big). \]

The change in $Z(i)$ is a bit easier to analyze: Once again, we obtain a new 4-walk $uww'w''v$ if one of its edges is added in step $i+1$. We will assume it is the first edge, $uw$, but by symmetry our counting argument works for all other edges, as well. Then the 4-walk comes from a 5-walk $utww'w''v$ in $G_i$. We can count the number of options by first taking an open triple $vw''w'$ at $v$ and then choosing an open 3-walk from $u$ to $w'$. Again, the created 4-walk will automatically be open unless $w'$ is $u$ or a neighbor of $u$, so there are about $Y(i)F(i)$ candidates of this type and $4Y(i)F(i)$ in total. And then of course, we lose an open 4-walk if we sample one of its two open triples, regardless of the outcome. This suggests
\[ Z(i+1)-Z(i) \approx \frac{2}{nF(i)} \Big( 4pY(i)F(i) - 2Z(i) \Big). \]

Once again, we are looking for smooth functions $y$ and $z$ such that $y(t)$ is approximately $Y(i)/\sqrt{n}$ and $z(t)$ is about $Z(i)/n$. Then the same computation as before gives the differential equations
\[ y'(t) = \frac{2}{f(t)}\big((2cd(t)-1)y(t)+cz(t)\big) \]
and
\[ z'(t) = \frac{4}{f(t)}\big(2cy(t)f(t)-z(t)\big). \]

\medskip

We have yet to talk about the initial conditions of the above system of differential equations. Our process starts with a star centered at some vertex $v_0$, i.e., an $n$-vertex graph with $n-1$ edges, all of them touching $v_0$. Then $D_v(0)=1$, $F_v(0)=n-2$, $Y_{u,v}(0)=0$ and $Z_{u,v}(0)=n-3$ for any two vertices $u$ and $v$ other than $v_0$. For convenience, we will drop the center of the star from consideration in the sense that we do not define the variables with $v_0$ among the indices. This is a technicality that allows us to prove concentration, and since our recurrence relations never use those variables, it causes no problem.

Hence we obtain the initial conditions $d(0)=0$, $f(0)=1$, $y(0)=0$ and $z(0)=1$, and an easy calculation shows that the corresponding solution of our system of differential equations is
\[ \begin{array}{c c c}
    d(t)=2ct & \quad \quad & f(t)=1-2t+4c^2t^2 \\
    y(t)=d(t)f(t) & \quad \quad & z(t)=f^2(t).
\end{array} \]
In the next section we prove that the variables indeed closely follow the paths defined by these functions.

\section{Calculations} \label{sec:calc} 

In this section we show that our variables follow the prescribed trajectories up to some time $T$. Of course, we cannot hope to do so if $f(t)$ vanishes somewhere on $[0,T]$, as that would mean that the process is expected to die before time $T$. Now if $c>1/2$ then $f$ has no positive root, so this is not an issue: we can take $T=\sqrt{\log n}$. However, if $c\le 1/2$ then $f$ does reach 0, first at time $T_0=\frac{1-\sqrt{1-4c^2}}{4c^2}$. In this case $T$ will be chosen to be a constant arbitrarily close to $T_0$.

The allowed deviation of each variable will be defined by one of the error functions
\[ g_1(t)=e^{Kt}n^{-1/6} \qquad \quad \mbox{and} \qquad \quad g_2(t)=(1+d(t))e^{Kt}n^{-1/6}, \]
where
\[K = 100 \cdot \max_{0\le t\le T} \left( 1+ \frac{d(t)}{f(t)} + \frac{1}{f(t)}\right). \]

It is clearly enough to prove the first part of Theorem~\ref{thm:main} for $c\le 1$, so from now on we will assume this is the case.

Let us define $\mG_i$ to be the event that all of the bounds below in Proposition~\ref{prop:main}(a)-(e) hold for every pair of vertices $u$ and $v$ and for all indices $j=0,\ldots,i$. This section is devoted to the proof of the following result, which is the key to proving that the variables follow the desired trajectories.

\begin{PROP}  \label{prop:main}
Fix some vertices $u$ and $v$. Then, conditioned on $\mG_{j-1}$, each of the following bounds fails with probability at most $n^{-10}$.
\begin{multicols}{2}
\begin{enumerate}
 \item[(a)] $D_v(j) \in \big(d(t_j)\pm g_1(t_j) \big)\sqrt{n}$
 \item[(b)] $F_v(j) \in \big(f(t_j)\pm g_1(t_j) \big)n$
 \item[(c)] $X_{u,v}(j) \le 50\log n$
 \item[(d)] $Y_{u,v}(j) \in \big(y(t_j)\pm g_2(t_j) \big)\sqrt{n}$
 \item[(e)] $Z_{u,v}(j) \in \big(z(t_j)\pm g_2(t_j) \big)n$
 \item[]
\end{enumerate}
\end{multicols}
\end{PROP}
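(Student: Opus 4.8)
The plan is to prove each of the five bounds (a)--(e) by a one-step martingale / large-deviation analysis, conditioning on the history $\mathcal{F}_{j-1}$ of the process (and implicitly on $\mG_{j-1}$, so that all the variables at step $j-1$ are within their prescribed windows). The key observation is that, given $\mG_{j-1}$, the heuristic one-step expected changes computed in Section~\ref{sec:diff} are in fact accurate up to the error terms, and the step-to-step fluctuation of each variable is small enough to apply a concentration inequality. So for each variable $W\in\{D_v,F_v,X_{u,v},Y_{u,v},Z_{u,v}\}$ I would: (i) write $W(j)-W(j-1)$ as a sum of (a bounded number of) indicator random variables determined by which open triple is sampled at step $j$ and whether the sample succeeds; (ii) compute $\Exp\big[W(j)-W(j-1)\mid\mathcal{F}_{j-1}\big]$ and check, using the bounds from $\mG_{j-1}$, that it matches $n^{-2}\cdot(\text{RHS of the corresponding differential equation, evaluated at }t_{j-1})$ up to an error that is $o(g_\ell'(t_{j-1})n^{-2}\cdot\text{scale})$; (iii) bound the one-step change $|W(j)-W(j-1)|$ deterministically (it is $O(\sqrt n)$ for $Y$ and $O(n)$ for $Z$, $O(1)$ for $D$, and $O(\sqrt n)$ for $F$ because a successful sample at a neighbor of $v$ can create $\approx D$ new open triples); (iv) sum over the $j$ steps and compare the accumulated expected drift with the integral of the differential equation, i.e. estimate $d(t_j)-\sum_{k\le j}\Exp[\Delta]$ via the error function $g_\ell$; (v) apply Freedman's inequality (or Azuma/Hoeffding on the bounded martingale difference sequence) to show the deviation of $W(j)$ from its ``target'' exceeds $\tfrac12 g_\ell(t_j)\cdot(\text{scale})$ with probability at most $n^{-10}$.

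The role of the error functions $g_1(t)=e^{Kt}n^{-1/6}$ and $g_2(t)=(1+d(t))e^{Kt}n^{-1/6}$ is precisely to absorb the cumulative drift: the factor $e^{Kt}$ is there so that the ``self-correcting'' part of the recursion (the terms in the differential equations proportional to $d/f$ and $1/f$, which measure how an error in one variable feeds into the next) is dominated, via a discrete Gr\"onwall-type argument, by the exponential growth of $g_\ell$; the constant $K$ was chosen for exactly this reason. The $n^{-1/6}$ base is comfortably larger than the typical $O(\sqrt{\log n}/n^{?})$-type martingale fluctuation accumulated over $t_j n^2 = O(n^2\sqrt{\log n})$ steps, yet $o(1)$, so the windows are genuinely concentrating. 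For part (c), since the codegrees are not concentrated, I would not try to match a trajectory; instead I would use the formula $\Exp[X_{u,v}(j)-X_{u,v}(j-1)\mid\mathcal{F}_{j-1}]\approx \tfrac{2p(Y_{u,v}+Y_{v,u})}{nF}=O(n^{-2}\cdot\sqrt n\cdot n^{-1/2})=O(n^{-2})$ (using the $Y$-bound from $\mG_{j-1}$ and $f(t)\ge f(T)>0$, after possibly shrinking $T$), so over all $O(n^2\sqrt{\log n})$ steps the total expected increase is $O(\sqrt{\log n})$, and since $X_{u,v}$ increases by at most $1$ per step, a Chernoff/Freedman bound gives $X_{u,v}(j)\le 50\log n$ except with probability $n^{-10}$.

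The main obstacle is carrying out step (ii) carefully for the variables $F_v$, $Y_{u,v}$ and $Z_{u,v}$: the heuristic counts in Section~\ref{sec:diff} neglected ``collision'' terms --- 3- or 4-walks that repeat vertices, or closures that fail to create a new open triple because the relevant third vertex is already a common neighbor --- and one must verify that, under $\mG_{j-1}$ (in particular the $O(\log n)$ codegree bound), all these corrections contribute only $o$ of the allowed error. The other delicate point is the bounded-differences input to Freedman: for $F_v$ and $Y_{u,v}$ a single successful sample at a neighbor can change the variable by $\Theta(\sqrt n)$, so one needs the sum of conditional variances to still be small enough, which is where the $n^{-1/6}$ (rather than a $\log$-type) window gives the necessary slack. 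I expect these bookkeeping estimates, rather than any conceptual difficulty, to be the bulk of the work, and they are presumably what Section~\ref{sec:calc} is devoted to.
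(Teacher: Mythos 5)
Your high-level approach is the same as the paper's: condition on $\mG_{j-1}$, decompose the evolution of each variable into one-step increments, compute the conditional expected drift from the differential-equation recurrences using the bounds guaranteed by $\mG_{j-1}$, verify via a Gr\"onwall-type argument that the cumulative drift error is absorbed by the exponential factor $e^{Kt}$ in $g_1,g_2$, and then apply a martingale concentration inequality to the centered sum. The paper's only organizational difference is that it splits each one-step change into a gain $A_i$ and a loss $C_i$, tracks a submartingale/supermartingale pair for each, and invokes Bohman's $(\eta,N)$-bounded martingale lemma rather than Freedman; these are essentially the same technology.

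The concrete gap is in your step (iii): the stated deterministic one-step bounds for $Y_{u,v}$ and $Z_{u,v}$, namely $O(\sqrt{n})$ and $O(n)$ respectively, are too coarse to close the concentration argument at the scale of the required windows. For $Y_{u,v}$ the window is $g_2(t)\sqrt n = O(n^{1/3}\cdot\text{polylog}\,n)$, and both Freedman's inequality and Bohman's Lemma~\ref{lem:mart} have a term proportional to $(\text{step bound})\times(\text{deviation})$ in the denominator of the exponent, so with a worst-case step of $\sqrt n$ the exponent would be of order $n^{1/3}/\sqrt n = n^{-1/6}\to 0$, giving no bound at all. The essential observation, which your proposal does not make explicit, is that under $\mG_{j-1}$ the codegree bound $X\le 50\log n$ forces a much smaller worst-case step: a new edge $uw$ creates a new $3$-walk $uww'v$ only when $w'$ is a \emph{common} neighbor of $w$ and $v$ (not merely a neighbor of $w$), so there are at most codegree-many, i.e.\ $O(\log n)$, new open $3$-walks per edge added. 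Similarly a new edge touching $u$ or $v$ creates at most degree$\,\times\,$codegree $= O(\sqrt n\log n)$ new open $4$-walks, and one not touching $u,v$ at most $O(\log^2 n)$, so the step bound for $Z$ is $O(\sqrt n\,\text{polylog}\,n)$, not $O(n)$. Your concluding remark that the variance and the $n^{-1/6}$ window supply enough slack does not repair this: in Freedman the worst-case step bound enters the denominator linearly in the deviation $a$, so no amount of variance control compensates for a $\sqrt n$ step when $a\sim n^{1/3}$. Once the sharper $(\eta,N)$-bounds are in place, the rest of your plan goes through as in the paper.
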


As a corollary, we obtain our main result.

\begin{THM} \label{thm:diffeq}
Suppose $c\le 1$, and $T\le \sqrt{\log n}$ and $K$ are defined as above. Then the bounds in Proposition~\ref{prop:main}(a)-(e) hold with high probability for all vertices $u$ and $v$ and for every $j=0,\ldots,T\cdot n^2$.
\end{THM}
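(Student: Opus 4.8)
The plan is to deduce Theorem~\ref{thm:diffeq} from Proposition~\ref{prop:main} by a union bound over all vertices, all pairs, and all $j\le Tn^2$, together with an induction on $j$ that keeps the event $\mG_j$ alive throughout. Concretely, recall that $\mG_i$ is the event that the bounds (a)--(e) hold simultaneously for every vertex $v$, every ordered pair $(u,v)$, and every index $j\le i$. I would first observe $\mG_0$ holds deterministically: this is exactly the statement that the initial conditions $D_v(0)=1$, $F_v(0)=n-2$, $X_{u,v}(0)\le 1$, $Y_{u,v}(0)=0$, $Z_{u,v}(0)=n-3$ match $d(0)=0$, $f(0)=1$, $y(0)=0$, $z(0)=1$ up to the stated error terms — and since $g_1(0)=n^{-1/6}$ and $g_2(0)=n^{-1/6}$, all five containments are satisfied with room to spare for large $n$ (note $50\log n\ge 1$). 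So the base case is immediate.

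For the inductive step, suppose $\mG_{j-1}$ holds. Then $\mG_j$ fails only if one of the bounds (a)--(e) fails for some choice of the (at most two) vertex indices at step $j$. There are at most $n$ vertices to consider for (a) and (b), at most $n^2$ ordered pairs for (c), (d), (e), and one value of $j$; so by Proposition~\ref{prop:main} and a union bound,
\[
\Prb[\,\overline{\mG_j}\mid \mG_{j-1}\,]\;\le\; \big(2n+3n^2\big)\cdot n^{-10}\;\le\; n^{-7}.
\]
Iterating, $\Prb[\,\overline{\mG_{Tn^2}}\,]\le \sum_{j=1}^{Tn^2}\Prb[\,\overline{\mG_j}\mid \mG_{j-1}\,]\le Tn^2\cdot n^{-7}\le n^{-4}\sqrt{\log n}=o(1)$, using $T\le\sqrt{\log n}$. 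Hence $\mG_{Tn^2}$ holds whp, which is precisely the assertion that (a)--(e) hold for all $u,v$ and all $j=0,\ldots,Tn^2$.

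The only genuine subtlety — and the place where one must be slightly careful rather than merely bookkeeping — is the logical structure of the conditioning. Proposition~\ref{prop:main} is a statement about $\Prb[\text{bound fails}\mid \mG_{j-1}]$ for a \emph{fixed} pair $(u,v)$, so when we union-bound over pairs we should phrase it as: on the event $\mG_{j-1}$, the probability (with respect to the randomness of the $j$-th sample, and any still-unexposed triples) that \emph{some} step-$j$ bound fails is at most $(2n+3n^2)n^{-10}$; equivalently $\Prb[\overline{\mG_j}\cap \mG_{j-1}]\le (2n+3n^2)n^{-10}\Prb[\mG_{j-1}]\le (2n+3n^2)n^{-10}$. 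Chaining these gives the bound on $\Prb[\overline{\mG_{Tn^2}}]$ without ever needing to condition on a low-probability event. Everything else is a direct consequence of Proposition~\ref{prop:main}, so once that proposition is in hand the theorem follows with essentially no further work.
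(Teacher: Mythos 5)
Your proof is correct and follows essentially the same approach as the paper: verify $\mG_0$ deterministically, then chain Proposition~\ref{prop:main} via a union bound over the (at most $O(n^2)$) vertex indices and the five bounds, and finally over $j\le Tn^2$, to conclude the total failure probability is $o(1)$. Your extra care about the conditioning structure --- reformulating the chained conditional probabilities as $\Prb[\overline{\mG_j}\cap\mG_{j-1}]$ so that one never conditions on a possibly rare event --- is a sound way to make rigorous what the paper states somewhat loosely (it defines $\mB_j$ as ``the event that, conditioned on $\mG_{j-1}$, \ldots fails,'' which is not literally an event), but the underlying argument is the same.
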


\begin{proof}
It is easy to check that $\mG_0$ always holds. If $\mB_j$ is the event that, conditioned on $\mG_{j-1}$, at least one of these bounds fails for $j$, then the failure probability is exactly $\Prb[\cup_{j=1}^{Tn^2}\mB_j]$. A trivial union bound over all pairs of vertices and all equations in Proposition~\ref{prop:main} shows that $\Prb[\mB_j]\le 5n^{-8}$, hence another union bound over the indices gives $\Prb[\cup_{j=1}^{Tn^2}\mB_j]\le n^{-5}=o(1)$.
\end{proof}

To prove Proposition~\ref{prop:main}, we follow the strategy in \cite{B09} and analyze each random variable separately. Our plan is to use some martingale concentration inequalities to bound the probability of large deviation. However, since we cannot track the exact values of the expectations, only estimate them by some intervals, we will use two separate sequences to bound each variable: A submartingale to bound from below, and a supermartingale to bound from above.

Recall that a stochastic process $X_0,X_1,\ldots$ is called a {\em submartingale} if $\Exp[X_{i+1}|X_1,\ldots,X_i]\ge X_i$ for all $i$, and a {\em supermartingale} if $\Exp[X_{i+1}|X_1,\ldots,X_i]\le X_i$ for all $i$. We say that a sequence $X_0,X_1,\ldots $ of variables is $(\eta,N)$-bounded if $X_i-\eta\le X_{i+1}\le X_i+N$ for all $i$. We call a sequence of pairs $X_0^{\pm},X_1^{\pm},\ldots$ an $(\eta,N)$-bounded martingale pair, if $X_0^+,X_1^+,\ldots$ is an $(\eta,N)$-bounded submartingale and $X_0^-,X_1^-,\ldots$ is an $(\eta,N)$-bounded supermartingale. The following concentration results of Bohman \cite{B09} are essential for proving that the variables follow the desired trajectories:

\begin{LEMMA}[Bohman] \label{lem:mart}
Suppose $\eta\le N/10$ and $a<\eta m$. If $0\equiv X_0^{\pm},X_1^{\pm},\ldots$ is an $(\eta,N)$-bounded martingale pair then
\[ \Prb[X_m^+\le -a] \le e^{-\frac{a^2}{3\eta mN}} \qquad \mbox{and} \qquad \Prb[X_m^-\ge a] \le e^{-\frac{a^2}{3\eta mN}}. \]
\end{LEMMA}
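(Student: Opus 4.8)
The plan is the exponential--moment (Chernoff) method, carried out separately for the submartingale $X^{+}$ and the supermartingale $X^{-}$, since the asymmetry of the $(\eta,N)$-bound makes the two tails behave quite differently. In both cases I condition on the information $\mathcal{F}_{i}$ contained in $X_1^{\pm},\dots,X_i^{\pm}$, bound a one--step moment generating function by a Hoeffding/Bennett--type convexity estimate, multiply these bounds along the sequence (using $X_0^{\pm}=0$), and finish with Markov's inequality.

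\emph{Submartingale direction.} Fix $\lambda>0$. Conditioned on $\mathcal{F}_{i-1}$, the increment $W=X_i^{+}-X_{i-1}^{+}$ takes values in $[-\eta,N]$ and has $\Exp[W\mid\mathcal{F}_{i-1}]\ge 0$. Dominating the convex function $y\mapsto e^{-\lambda y}$ on $[-\eta,N]$ by the chord joining $(-\eta,e^{\lambda\eta})$ to $(N,e^{-\lambda N})$ and taking conditional expectations --- this chord has negative slope, so the nonnegative drift only helps --- gives
\[ \Exp[e^{-\lambda W}\mid\mathcal{F}_{i-1}]\ \le\ \frac{N}{N+\eta}\,e^{\lambda\eta}+\frac{\eta}{N+\eta}\,e^{-\lambda N}. \]
Setting $\beta=\lambda(N+\eta)$, pulling out $e^{\lambda\eta}$, and applying $\log(1-x)\le -x$ followed by the elementary inequality $e^{-\beta}-1+\beta\le\beta^{2}/2$ shows the logarithm of the right--hand side is at most $\tfrac12\,\eta(N+\eta)\lambda^{2}$. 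Multiplying through, $\Exp[e^{-\lambda X_m^{+}}]\le\exp\!\bigl(\tfrac12 m\eta(N+\eta)\lambda^{2}\bigr)$, so $\Prb[X_m^{+}\le -a]\le\exp\!\bigl(-\lambda a+\tfrac12 m\eta(N+\eta)\lambda^{2}\bigr)$; choosing $\lambda=a/\bigl(m\eta(N+\eta)\bigr)$ gives $\exp\!\bigl(-a^{2}/(2m\eta(N+\eta))\bigr)$, which is $\le\exp\!\bigl(-a^{2}/(3m\eta N)\bigr)$ because $\eta\le N/10$ forces $N+\eta\le\tfrac32 N$.

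\emph{Supermartingale direction.} This is the delicate case: the dangerous direction here (upwards, toward $a$) carries the \emph{large} step bound $N$, so replacing $\log\Exp[e^{\lambda W}\mid\mathcal{F}_{i-1}]$ by a clean multiple of $\lambda^{2}$ is both invalid for large $\lambda N$ and too lossy to reach the constant $3$. Instead I keep the exact Chernoff exponent. The analogous convexity estimate (now dominating $e^{\lambda y}$ by the chord from $(-\eta,e^{-\lambda\eta})$ to $(N,e^{\lambda N})$, which has positive slope so the nonpositive drift helps) gives the one--step bound $\tfrac{N}{N+\eta}e^{-\lambda\eta}+\tfrac{\eta}{N+\eta}e^{\lambda N}$; pulling out $e^{-\lambda\eta}$, writing $s=\lambda(N+\eta)$, and using $\log(1+x)\le x$ bounds its logarithm by $\tfrac{\eta}{N+\eta}\,(e^{s}-1-s)$. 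Iterating and applying Markov,
\[ \Prb[X_m^{-}\ge a]\ \le\ \exp\!\left(\frac{m\eta}{N+\eta}\bigl(e^{s}-1-(1+\alpha)s\bigr)\right),\qquad \alpha:=\frac{a}{m\eta}, \]
and the hypothesis $a<\eta m$ is exactly what forces $\alpha<1$. I then minimize $e^{s}-1-(1+\alpha)s$ over $s>0$ \emph{before} trying to compare it with a quadratic: the minimum sits at $s=\log(1+\alpha)$ and equals $\alpha-(1+\alpha)\log(1+\alpha)$. The proof closes with the one--variable fact that $\alpha\mapsto\bigl((1+\alpha)\log(1+\alpha)-\alpha\bigr)/\alpha^{2}$ is non--increasing on $(0,\infty)$ (differentiate once and invoke $\log(1+\alpha)\ge\alpha/(1+\alpha)$), so on $(0,1]$ it stays at least its value $2\log 2-1$ at $\alpha=1$, whence $\alpha-(1+\alpha)\log(1+\alpha)\le-(2\log 2-1)\alpha^{2}$. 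Substituting and using $\eta\le N/10$ --- so $N+\eta\le\tfrac{11}{10}N$ and $\tfrac{10}{11}(2\log 2-1)>\tfrac13$ --- bounds the exponent by $-a^{2}/(3m\eta N)$.

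The single genuine obstacle is this second direction: one must avoid the temptation to collapse the log--moment generating function into an $O(\lambda^{2})$ bound and instead carry the true exponent through the optimization, which is where the hypothesis $a<\eta m$ keeps us in the sub-Gaussian (Bernstein) regime and where the numerical slack in $\eta\le N/10$ is exactly what is needed for the constant $\tfrac13$. (Alternatively, the statement can be deduced from a Freedman--type martingale inequality, using that each conditional increment has conditional variance at most $\eta N$ and that $a<\eta m$ makes the Bernstein correction of lower order.) Everything else --- the two convexity one--step estimates and the couple of elementary calculus inequalities --- is routine.
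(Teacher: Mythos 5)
The paper does not prove this lemma: it is quoted verbatim as a known result from Bohman's triangle-free-process paper \cite{B09} (his Lemmas~6 and~7), so there is no internal argument to compare your attempt against. On its own terms your proof is correct, and it follows the same exponential-moment (Chernoff) route that Bohman uses: bound the one-step conditional MGF via the chord/convexity estimate, iterate using $X_0^{\pm}=0$, and finish with Markov.

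Both directions check out. In the submartingale direction the Hoeffding-style reduction $e^{-\beta}-1+\beta\le\beta^2/2$ is valid for all $\beta\ge 0$ and, as you implicitly observe, the hypothesis $a<\eta m$ is not needed; the final comparison only requires $N+\eta\le\tfrac{3}{2}N$, which $\eta\le N/10$ certainly gives. In the supermartingale direction you correctly keep the exact Bennett exponent, optimize at $s=\log(1+\alpha)$, and reduce to the one-variable bound $(1+\alpha)\log(1+\alpha)-\alpha\ge(2\log 2-1)\alpha^{2}$ on $(0,1]$, with $a<\eta m$ being exactly what confines $\alpha$ to $(0,1]$; the numerical closing step $\tfrac{10}{11}(2\log 2-1)>\tfrac13$ is correct. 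One small imprecision: establishing that $\alpha\mapsto\bigl((1+\alpha)\log(1+\alpha)-\alpha\bigr)/\alpha^{2}$ is non-increasing is a two-derivative argument, not one. Differentiating once reduces it to $g(\alpha):=2\alpha-(2+\alpha)\log(1+\alpha)\le 0$, and it is only by noting $g(0)=0$ and $g'(\alpha)=\tfrac{\alpha}{1+\alpha}-\log(1+\alpha)\le 0$ (where $\log(1+\alpha)\ge\alpha/(1+\alpha)$ finally enters) that you get the sign; substituting $\log(1+\alpha)\ge\alpha/(1+\alpha)$ directly into $g$ would not give what you need. This is a one-line fix and does not affect the validity of the proof.
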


\medskip
The general idea for analyzing a random variable $R(i)$, representing any of the above five variables, is the following. In step $i$, an open triple is sampled, and thus with probability $p$ a new edge is added to our graph. We split the one-step change in $R(i)$ into two non-negative variables: $A_i$ is the gain and $C_i$ is the loss in step $i$, so $R(j)=R(0)+\sum_{i=1}^j A_i-C_i$. The gain comes from the contribution of the added edge after a successful sample. Loss can only occur when some open triple stops being open, either because it was sampled or because its missing edge was added through some other open triple (although the effect of the latter event is negligible compared to the former if the codegrees are small).

Next we estimate the expectation of $A_i$ (using the recurrence relations we hinted at in Section~\ref{sec:diff}), so that we can define $A_i^+$ and $A_i^-$, shifted copies of $A_i$ with non-negative and non-positive expectations, respectively. This way $B_j^{\pm}=\sum_{i=1}^j A_i^{\pm}$ is an $(\eta,N)$-bounded martingale pair, where $\eta$ is approximately the expectation and $N$ is some trivial upper bound on $A_i$. We do the same with the $C_i$ to define $C_i^{\pm}$ and the martingale pair $D_j^{\pm}= \sum_{i=1}^j C_i^{\pm}$.

Finally we establish a connection between the concentration of $R(j)=R(0)+\sum_{i=1}^j A_i-C_i$ and the concentration of our shifted variables $B_j^{\pm}$ and $D_j^{\pm}$ in Lemma~\ref{lem:bound}, and then use the concentration of martingale pairs, Lemma~\ref{lem:mart}, to bound the error probabilities in Corollary~\ref{cor:prob}.

\medskip
The rest of this section is devoted to the actual calculations. The reader might want to skip the details at a first reading.
The first subsection establishes the tools we use to prove concentration, while the remaining five subsections prove one-by-one the five parts of Proposition~\ref{prop:main}.

\subsection{Tools}

The following claim will help us clean up the calculations of the expectations. Recall that $K= 100 \cdot \max_{0\le t\le T} \left( 1+ \frac{d(t)}{f(t)} + \frac{1}{f(t)}\right)$. 

\begin{CLAIM} \label{lem:squeeze}
Let $0\le t\le T$ so that $f(t)>0$ is bounded away from 0 ($t$ might depend on $n$). If $r(t)$ is one of the functions 1, $d(t)$ or $f(t)$ then
\begin{align*}
  \frac{( r(t)\pm g_1(t) )( f(t)\pm g_1(t) ) \left( 1 + O(\frac{\log n}{\sqrt{n}}) \right) }{f(t)\pm g_1(t)} &\subs r(t) \pm \frac{K}{20}g_1(t)  &\qquad \mbox{and} \\
  \frac{( r(t)\pm g_1(t) )( y(t)\pm g_2(t) ) \left( 1 + O(\frac{\log^2 n}{\sqrt{n}}) \right) }{f(t)\pm g_1(t)} &\subs r(t)d(t) \pm \frac{K}{20}g_2(t)  &\qquad \mbox{and also} \\
  \frac{(z(t)\pm g_2(t)) \left( 1 + O(\frac{\log n}{\sqrt{n}}) \right) }{f(t)\pm g_1(t)} &\subs f(t) \pm \frac{K}{20}g_2(t)
\end{align*}
\end{CLAIM}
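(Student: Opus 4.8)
The plan is to prove all three inclusions by careful interval arithmetic: substitute arbitrary values from the intervals on the left, and check that after simplification the result lies in the claimed interval on the right; the definition of $K$ and the polynomial smallness of $g_1,g_2$ are tailored precisely so that every error term is absorbed, with $\tfrac K{20}$ comfortably beating the $\tfrac K{25},\tfrac K{50}$ that actually come up. I would first record the facts used throughout. On $[0,T]$, since $c\le 1$ and $T\le\sqrt{\log n}$, each of $1$, $d(t)=2ct$, $f(t)=1-2t+4c^2t^2$, $y(t)=d(t)f(t)$ and $z(t)=f(t)^2$ is $O(\log^2 n)$; and since $e^{Kt}\le e^{K\sqrt{\log n}}=n^{o(1)}$ we have $n^{-1/6}\le g_1(t)\le g_2(t)=n^{-1/6+o(1)}=o(1)$, so $\tfrac{\log^k n}{\sqrt n}=o(g_1(t))=o(g_2(t))$ for every fixed $k$. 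The definition of $K$ gives, for every $t\le T$ with $f(t)>0$, the inequalities $\tfrac1{f(t)},\ \tfrac{d(t)}{f(t)},\ 1+\tfrac1{f(t)}+\tfrac{d(t)}{f(t)}\le\tfrac K{100}$, and---evaluating at $t=0$, where $f(0)=1$---that $K\ge 200$. Finally, since $f(t)$ is bounded away from $0$ and $g_1(t)\to0$, for large $n$ any value in $f(t)\pm g_1(t)$ lies in $[\tfrac12 f(t),2f(t)]$.

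For the first inclusion I would write the ratio of the two factors $f(t)\pm g_1(t)$ as $1+\nu$, where $|\nu|\le\tfrac{2g_1(t)}{f(t)-g_1(t)}\le\tfrac{4g_1(t)}{f(t)}$, and expand $(r(t)\pm g_1(t))(1+\nu)\big(1+O(\tfrac{\log n}{\sqrt n})\big)$. The one point to notice is that $r(t)\nu$ has size $O\big(\tfrac{r(t)g_1(t)}{f(t)}\big)\le\tfrac K{25}g_1(t)$ for each $r\in\{1,d,f\}$ (using $\tfrac1f,\tfrac df\le\tfrac K{100}$ and $\tfrac ff=1\le\tfrac K{100}$), while the factor $1+O(\tfrac{\log n}{\sqrt n})$ contributes only $O\big(r(t)\tfrac{\log n}{\sqrt n}\big)=o(g_1(t))$ since $r(t)=O(\log n)$. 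So the expression lies in $r(t)\pm\big(g_1(t)+\tfrac K{25}g_1(t)+o(g_1(t))\big)\subs r(t)\pm\tfrac K{20}g_1(t)$, the last inclusion using $1+\tfrac K{25}+o(1)\le\tfrac K{20}$, which follows from $K\ge 200$.

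For the other two inclusions the extra step is to simplify the quotient by $f(t)\pm g_1(t)$ using $y=df$ and $z=f^2$. Writing $y(t)+\epsilon=d(t)(f(t)+\epsilon')+(\epsilon-d(t)\epsilon')$ with $|\epsilon-d(t)\epsilon'|\le g_2(t)+d(t)g_1(t)\le 2g_2(t)$ and dividing by $f(t)+\epsilon'\ge\tfrac12 f(t)$ gives $\tfrac{y(t)\pm g_2(t)}{f(t)\pm g_1(t)}\subs d(t)\pm\tfrac{4g_2(t)}{f(t)}\subs d(t)\pm\tfrac K{25}g_1(t)$, where $\tfrac{g_2(t)}{f(t)}=g_1(t)\big(\tfrac1{f(t)}+\tfrac{d(t)}{f(t)}\big)\le\tfrac K{100}g_1(t)$; analogously $\tfrac{z(t)\pm g_2(t)}{f(t)\pm g_1(t)}\subs f(t)\pm\tfrac K{50}g_1(t)$. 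Multiplying the latter by $1+O(\tfrac{\log n}{\sqrt n})$ only adds $o(g_1(t))$, giving the third inclusion. For the second, multiply by $(r(t)\pm g_1(t))$ and $1+O(\tfrac{\log^2 n}{\sqrt n})$: when $r\in\{1,d(t)\}$ one has $r(t)g_1(t)\le g_2(t)$ (since $r(t)\le 1+d(t)$), so all errors together are at most $g_2(t)\big(\tfrac K{25}+1+o(1)\big)\le\tfrac K{20}g_2(t)$. The case $r=f$ needs a different pairing: cancel the two $f(t)\pm g_1(t)$ factors (ratio $1+\nu$, $|\nu|\le\tfrac{4g_1(t)}{f(t)}$), and use $\tfrac{y(t)}{f(t)}=d(t)$ to see that $(y(t)\pm g_2(t))\nu$ has size $\le 4d(t)g_1(t)+o(g_2(t))\le 5g_2(t)$, which again places everything inside $f(t)d(t)\pm\tfrac K{20}g_2(t)$.

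The step I expect to be the main obstacle is precisely this bookkeeping around the possibly $\Theta(\log^2 n)$-sized functions $f,y,z$: one must check that wherever such a factor multiplies a small quantity---$\nu=O(g_1/f)$ or $O(\tfrac{\log^k n}{\sqrt n})$---the product comes out $O(g_1)$ or $O(g_2)$, thanks to the cancellations $\tfrac yf=d$, $\tfrac zf=f$, $\tfrac{fg_1}f=g_1$ and to the fact that $g_1,g_2\ge n^{-1/6}$ dwarf any fixed power of $\log n$ over $\sqrt n$, rather than producing something of size $\log^2 n\cdot g_1$. Everything else is routine constant-chasing, the definition of $K$ supplying exactly the slack between the ratios it bounds by $\tfrac K{100}$ and the target errors $\tfrac K{20}g_1,\tfrac K{20}g_2$.
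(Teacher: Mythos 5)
Your proposal is correct and takes essentially the same route as the paper: interval arithmetic, expanding each quotient around its main term and absorbing the first-order errors using the defining bound $\frac{1}{f},\frac{d}{f}\le\frac{K}{100}$ and the second-order errors using $g_1,g_2=n^{-1/6+o(1)}$ together with $\frac{\polylog n}{\sqrt n}=o(g_1)$. The only cosmetic difference is that you cancel the two copies of $f(t)\pm g_1(t)$ as a ratio $1+\nu$ (and, for the second and third inclusions, use the identity $y+\epsilon=d(f+\epsilon')+(\epsilon-d\epsilon')$), whereas the paper Taylor-expands $\frac{1}{f(t)\pm g_1(t)}\subs\frac1{f}\pm\frac{g_1}{f^2}+O(\frac{g_1^2}{f^3})$ and multiplies out; the resulting error coefficients and the final constant-chasing are the same. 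One small caveat: your ``analogously $\frac{z\pm g_2}{f\pm g_1}\subs f\pm\frac{K}{50}g_1$'' is not literally analogous, since $fg_1\le g_2$ can fail for large $t$ when $c>\frac12$ (unlike $dg_1\le g_2$), so the crude bound $f+\epsilon'\ge\frac12 f$ used in the $y$-case would here give $\frac{K}{50}+2$ rather than $\frac{K}{50}$; using the sharper $\frac{fg_1}{f-g_1}=g_1(1+o(1))$ does recover your stated $\frac{K}{50}$, and in any case the extra $O(g_1)$ is harmless against the target $\frac{K}{20}g_2$ since $K\ge 200$.
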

\begin{proof}
Straightforward calculus shows that
\[\frac{1}{f(t)\pm g_1(t)} \subs \left( \frac{1}{f(t)}\pm \frac{g_1(t)}{f^2(t)} + O\left(\frac{g_1^2(t)}{f^3(t)}\right)\right). \]
Using this, we will multiply out the formulas on the left-hand side of the inequalities. Note that $g_1(t)$ and $g_2(t)$ are both $O(n^{-1/7})$, so in the expanded formulas, any term containing two factors of the type $g_{\alpha}(t)$ or a factor of $O(\frac{\polylog n}{\sqrt{n}})$ is consumed by an $O(n^{-2/7})$ error term. Hence the left-hand side of the first inequality is contained in
\begin{align*}
  &~~~ (r(t)\pm g_1(t) )( f(t)\pm g_1(t) ) \left( 1 + O(\frac{\log n}{\sqrt{n}}) \right)\left( \frac{1}{f(t)}\pm \frac{g_1(t)}{f^2(t)} + O(n^{-2/7})\right) \\
  &\subs r(t) \pm \left( \frac{2r(t)}{f(t)} + 1 \right)g_1(t) +O(n^{-2/7}) \subs r(t) \pm \frac{K}{20} g_1(t).
\end{align*}
Similarly, the left-hand side of the second inequality is contained in
\begin{align*}
  &~~~ (r(t)\pm g_1(t) )( y(t)\pm g_2(t) ) \left( 1 + O(\frac{\log^2 n}{\sqrt{n}}) \right)\left( \frac{1}{f(t)}\pm \frac{g_1(t)}{f^2(t)} + O(n^{-2/7})\right) \\
  &\subs r(t)d(t) \pm \left( \frac{r(t)}{f(t)} + \frac{y(t)}{f(t)(1+d(t))} + \frac{r(t)y(t)}{f^2(t)(1+d(t))} \right)g_2(t) +O(n^{-2/7}) \subs r(t)d(t) \pm \frac{K}{20} g_2(t)
\end{align*}
using $y(t)=f(t)d(t)$ and $g_2(t)=(1+d(t))g_1(t)$. Finally, the left-hand side of the last inequality is contained in
\begin{align*}
  &~~~ ( z(t)\pm g_2(t) ) \left( 1 + O(\frac{\log n}{\sqrt{n}}) \right)\left( \frac{1}{f(t)}\pm \frac{g_1(t)}{f^2(t)} + O(n^{-2/7})\right) \\
  &\subs f(t) \pm \left( \frac{1}{f(t)} + \frac{z(t)}{f^2(t)(1+d(t))} \right)g_2(t) +O(n^{-2/7}) \subs f(t) \pm \frac{K}{20} g_1(t)
\end{align*}
using $z(t)=f^2(t)$.
\end{proof}

The remaining lemmas connect the concentration of the original variables and those shifted by the expectations. We will use the following observations in the calculations.

\begin{CLAIM} \label{lem:intsum}
Let $s(t)$ be a differentiable function on $[0,T]$ such that $\sup_{t\in [0,T]} |s'(t)| = O(\polylog n)$ and $t_i=\frac{i}{n^2}$. Then
\[ \frac{1}{n^2}\sum_{i=0}^{j-1} s(t_i) = \int_0^{t_j} s(\tau)d\tau +O(n^{-1}). \]
\end{CLAIM}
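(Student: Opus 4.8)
The plan is to read the sum as a left-endpoint Riemann sum for $\int_0^{t_j} s(\tau)\,d\tau$ with uniform mesh $1/n^2$, and to control the discrepancy interval by interval using the hypothesis $\sup_{[0,T]}|s'| = O(\polylog n)$. Since $0 = t_0 < t_1 < \cdots < t_j = t_j$ partitions $[0,t_j]$ into $j$ intervals of length $1/n^2$, I would first split
\[ \int_0^{t_j} s(\tau)\,d\tau - \frac{1}{n^2}\sum_{i=0}^{j-1}s(t_i) = \sum_{i=0}^{j-1}\left( \int_{t_i}^{t_{i+1}} s(\tau)\,d\tau - \frac{1}{n^2}s(t_i)\right) = \sum_{i=0}^{j-1}\int_{t_i}^{t_{i+1}}\big(s(\tau)-s(t_i)\big)\,d\tau. \]

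Next I would bound each summand. For $\tau\in[t_i,t_{i+1}]$ we have $|s(\tau)-s(t_i)| = \left|\int_{t_i}^{\tau} s'\right| \le (\tau-t_i)\sup_{[0,T]}|s'|$, so integrating over $[t_i,t_{i+1}]$ gives
\[ \left|\int_{t_i}^{t_{i+1}}\big(s(\tau)-s(t_i)\big)\,d\tau\right| \le \frac{1}{2n^4}\sup_{[0,T]}|s'| = O\!\left(\frac{\polylog n}{n^4}\right). \]
Summing over the $j$ intervals, and using that we only ever invoke the claim for $j\le Tn^2$ with $T\le\sqrt{\log n}$, the total error is at most $j\cdot O(\polylog n/n^4) = O(\sqrt{\log n}\cdot \polylog n / n^2) = O(n^{-1})$, which is the claimed bound.

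There is no real obstacle here; it is a routine Riemann-sum estimate. The only thing worth noting is that the number of summands $j$ is governed by the running time of the process, so the polylogarithmic factor coming from $\sup|s'|$ must be absorbed by the $n^{-2}$ gained from squaring the mesh length — which it comfortably is, since $T = O(\sqrt{\log n})$. (In fact the same computation yields error $O(\polylog n\,/\,n^2) = O(n^{-1})$ for any $j\le n^2$, so the estimate is robust to the exact choice of $T$.)
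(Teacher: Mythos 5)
Your proof is correct and follows essentially the same route as the paper's: both treat the sum as a left-endpoint Riemann sum, bound the per-interval error by $(1/n^2)^2 \cdot \sup|s'|$ (the paper cites this as a standard numerical-analysis fact, while you derive it directly from the fundamental theorem of calculus), and then sum over the $j \le Tn^2$ intervals, absorbing the $\polylog n$ factor into $O(n^{-1})$.
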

\begin{proof}
It is a well-known fact in numerical analysis that for reals $a\le q \le b$
\[ \left| \int_a^b s(\tau)d\tau - (b-a)s(q) \right| \le (b-a)^2\sup_{t\in [a,b]} |s'(t)|.\]
Taking $a=q=t_i$ with $b=t_{i+1}$ and using $t_{i+1}-t_i=\frac{1}{n^2}$, this gives 
\[\left| \int_{t_i}^{t_{i+1}} s(\tau)d\tau - \frac{s(t_i)}{n^2} \right| \le \frac{t_{i+1}-t_i}{n^2}\sup_{t\in [t_i,t_{i+1}]} |s'(t)|,\]
and summing these up for $i=0,\ldots,j-1$, we get 
\[ \left| \int_0^{t_j} s(\tau)d\tau - \frac{1}{n^2}\sum_{i=0}^{j-1} s(t_i)  \right| \le \frac{t_j \cdot \sup_{t\in [0,t_j]} |s'(t)|}{n^2} = O \left(\frac{\polylog n}{n^2}\right) \le O(n^{-1}). \]
\end{proof}

This claim will be applied when $s$ is one of the functions $d',f',x',y',z',g_1'$ and $g_2'$, in which case $s'$ is indeed bounded by $O(\polylog n)$ in the interval $[0,\sqrt{\log n}]$.

\medskip
\begin{CLAIM} \label{lem:intexp}
For $\alpha\in\{1,2\}$ we have
\[ \int_0^{t} g_{\alpha}(\tau) d\tau \le \frac{1}{K} (g_{\alpha}(t)-n^{-1/6}). \]
\end{CLAIM}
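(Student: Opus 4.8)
The plan is to reduce this integral bound to the pointwise differential inequality $g_{\alpha}'(\tau)\ge K g_{\alpha}(\tau)$ on $[0,t]$ and then integrate once. First I would record that, since $d(0)=2c\cdot 0=0$, both error functions satisfy $g_1(0)=g_2(0)=n^{-1/6}$; hence the right-hand side of the claim is exactly $\frac1K\big(g_{\alpha}(t)-g_{\alpha}(0)\big)=\frac1K\int_0^t g_{\alpha}'(\tau)\,d\tau$, so it suffices to prove $g_{\alpha}'(\tau)\ge K g_{\alpha}(\tau)$ for every $\tau\in[0,T]$.

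For $\alpha=1$ this is immediate: from $g_1(\tau)=e^{K\tau}n^{-1/6}$ we get $g_1'(\tau)=Ke^{K\tau}n^{-1/6}=Kg_1(\tau)$, so equality holds throughout (consistent with the fact that for $\alpha=1$ the claimed inequality is itself an equality). For $\alpha=2$, I would differentiate the product $g_2(\tau)=(1+d(\tau))e^{K\tau}n^{-1/6}$ using $d'(\tau)=2c$:
\[ g_2'(\tau)=\big(2c+K(1+d(\tau))\big)e^{K\tau}n^{-1/6}=Kg_2(\tau)+2c\,e^{K\tau}n^{-1/6}\ge Kg_2(\tau), \]
where the last step uses $c>0$. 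Integrating $g_{\alpha}'(\tau)\ge Kg_{\alpha}(\tau)$ from $0$ to $t$ yields $g_{\alpha}(t)-n^{-1/6}\ge K\int_0^t g_{\alpha}(\tau)\,d\tau$, which rearranges to the desired bound.

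There is essentially no obstacle here: the only points requiring any care are the initial value $g_{\alpha}(0)=n^{-1/6}$ (which is exactly what produces the subtracted constant $n^{-1/6}$ on the right-hand side), and the monotonicity of $d$, i.e.\ $d'=2c\ge 0$, which is what upgrades the $\alpha=2$ identity to an inequality. Note in particular that this bound does not require $K$ to be large — any $K>0$ works — even though $K$ is chosen large elsewhere for unrelated reasons.
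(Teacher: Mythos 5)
Your proof is correct and follows essentially the same route as the paper: both establish the pointwise inequality $g_{\alpha}'(\tau)\ge K g_{\alpha}(\tau)$ (the paper writes $g_{\alpha}(t)=\varphi(t)e^{Kt}n^{-1/6}$ with $\varphi\in\{1,1+d\}$, and uses $\varphi(0)=1$, $\varphi'\ge 0$) and then integrate from $0$ to $t$, using $g_{\alpha}(0)=n^{-1/6}$ to produce the subtracted constant.
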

\begin{proof}
Note that $g_{\alpha}(t)=\varphi(t)e^{Kt}n^{-1/6}$, where $\varphi(t)$ is either constant 1 or $1+d(t)$. In both cases, $\varphi(0)=1$ and $\varphi'(t)\ge 0$ for $t\ge 0$, so
\[ \frac{g_{\alpha}'(t)}{K} = \left(\frac{1}{K}\varphi(t)e^{Kt} n^{-1/6}\right)' = (\varphi'(t)e^{Kt}/K + \varphi(t)e^{Kt})n^{-1/6} \ge g_{\alpha}(t). \]
Hence
\[ \int_0^{t} g_{\alpha}(\tau) d\tau \le \int_0^{t} \frac{g_{\alpha}'(\tau)}{K} d\tau = \frac{1}{K} (g_{\alpha}(t)-n^{-1/6}), \]
as required.
\end{proof}

It is time to formally define the shifted variables. Recall that if $R(i)$ represents one of our random variables, then we use the non-negative variables $A_i$ and $C_i$ for the one-step increase and decrease in $R$, respectively, so that $R(i)-R(i-1)=A_i-C_i$. Our aim is to show that $R(i)$ is approximately $n^{\gamma}r(t_i)$ for some real $\gamma$, where the error (the allowed fluctuation of $R$) is bounded by $n^{\gamma}g_{\alpha}(t_i)$ for some $\alpha\in \{1,2\}$. Here our choice of $\gamma$ and $\alpha$ depends on the variable $R$ represents: $\gamma$ will be 1 for $F$ and $Z$, $1/2$ for $D$ and $Y$, and 0 for $X$, while $\alpha$ will be 1 for $D$ and $F$, and 2 for $X$, $Y$ and $Z$.

To show the concentration of $R$, we approximate $A_i$ and $C_i$ by their expectations, which, as we shall prove, lie in the intervals $n^{\gamma-2}(r_A(t_{i-1})\pm \frac{K}{2}g_{\alpha}(t_{i-1}))$ and $n^{\gamma-2}(r_C(t_{i-1})\pm \frac{K}{2}g_{\alpha}(t_{i-1}))$, respectively, for some appropriately chosen functions $r_A(t)$ and $r_C(t)$. Thus we can define the shifted variables $A_i^+$ and $C_i^+$ having non-negative expectation, as well as $A_i^-$ and $C_i^-$ having non-positive expectation as follows: 
\begin{align*}
  A_i^{\pm} &=A_i-n^{\gamma-2}(r_A(t_{i-1})\mp \frac{K}{2}g_{\alpha}(t_{i-1})) \quad\quad \mbox{with} \quad\quad B_j^{\pm}=\sum_{i=1}^j A_i^{\pm} \quad \mbox{and} \\
  C_i^{\pm} &=C_i-n^{\gamma-2}(r_C(t_{i-1})\mp \frac{K}{2}g_{\alpha}(t_{i-1})) \quad\quad \mbox{with} \quad\quad D_j^{\pm}=\sum_{i=1}^j C_i^{\pm}.
\end{align*}

\begin{LEMMA} \label{lem:bound}
Suppose the variable $R$ satisfies $R(j)=n^{\gamma}r(0)+\sum_{i=1}^j A_i-C_i$, where $r$ is a polynomial in $t$ such that $r'(t)=r_A(t)-r_C(t)$. Then
\begin{align*}
  R(j) &\le n^{\gamma}(r(t_j)+g_{\alpha}(t_j)) - n^{\gamma-1/6}/2 + B_j^- - D_j^+ \quad \mbox{and} \\
  R(j) &\ge n^{\gamma}(r(t_j)-g_{\alpha}(t_j)) + n^{\gamma-1/6}/2 + B_j^+ - D_j^-.
\end{align*}
\end{LEMMA}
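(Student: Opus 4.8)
The plan is to prove the two inequalities by substituting the definitions of $B_j^{\pm}$ and $D_j^{\pm}$ into the telescoping expression for $R(j)$ and then estimating the deterministic sum that appears. Starting from $R(j)=n^{\gamma}r(0)+\sum_{i=1}^j A_i - C_i$, I would write $A_i = A_i^{-} + n^{\gamma-2}(r_A(t_{i-1}) + \tfrac{K}{2}g_{\alpha}(t_{i-1}))$ and $C_i = C_i^{+} + n^{\gamma-2}(r_C(t_{i-1}) - \tfrac{K}{2}g_{\alpha}(t_{i-1}))$ to get an upper bound, and symmetrically $A_i = A_i^{+} + n^{\gamma-2}(r_A(t_{i-1}) - \tfrac{K}{2}g_{\alpha}(t_{i-1}))$ and $C_i = C_i^{-} + n^{\gamma-2}(r_C(t_{i-1}) + \tfrac{K}{2}g_{\alpha}(t_{i-1}))$ for the lower bound. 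Summing, the martingale-pair terms collect into $B_j^{-} - D_j^{+}$ (resp.\ $B_j^{+} - D_j^{-}$), and what remains is $n^{\gamma}r(0)$ plus $n^{\gamma-2}\sum_{i=1}^j \big(r_A(t_{i-1}) - r_C(t_{i-1})\big) \pm n^{\gamma-2}\, K \sum_{i=1}^j g_{\alpha}(t_{i-1})$.

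Next I would turn the two deterministic sums into integrals using Claim~\ref{lem:intsum}. Since $r'(t) = r_A(t) - r_C(t)$ and $r$ is a polynomial (so $r'$ is certainly $O(\polylog n)$ on $[0,T]$), Claim~\ref{lem:intsum} gives $\frac{1}{n^2}\sum_{i=0}^{j-1}(r_A(t_i) - r_C(t_i)) = \int_0^{t_j} r'(\tau)\,d\tau + O(n^{-1}) = r(t_j) - r(0) + O(n^{-1})$. Multiplying by $n^{\gamma}$, the $n^{\gamma}r(0)$ term cancels and we are left with $n^{\gamma}r(t_j)$ up to an $O(n^{\gamma-1})$ error, which is negligible against the $n^{\gamma-1/6}$ scale. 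For the error sum, Claim~\ref{lem:intsum} applied to $g_{\alpha}'$ (again $O(\polylog n)$ on $[0,\sqrt{\log n}]$) gives $\frac{1}{n^2}\sum_{i=0}^{j-1} g_{\alpha}(t_i) = \int_0^{t_j} g_{\alpha}(\tau)\,d\tau + O(n^{-1})$, and then Claim~\ref{lem:intexp} bounds this integral by $\frac{1}{K}(g_{\alpha}(t_j) - n^{-1/6})$. Hence $n^{\gamma-2}\, K \sum_{i=1}^j g_{\alpha}(t_{i-1}) \le n^{\gamma}\big(g_{\alpha}(t_j) - n^{-1/6}\big) + O(n^{\gamma-1})$.

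Putting these estimates together, the upper bound becomes $R(j) \le n^{\gamma}r(t_j) + n^{\gamma}(g_{\alpha}(t_j) - n^{-1/6}) + O(n^{\gamma-1}) + B_j^{-} - D_j^{+}$, and since $O(n^{\gamma-1}) \le n^{\gamma-1/6}/2$ for large $n$ (absorbing the leftover $n^{-1/6}$ term leaves at least $n^{\gamma-1/6}/2$ of slack), we obtain $R(j) \le n^{\gamma}(r(t_j) + g_{\alpha}(t_j)) - n^{\gamma-1/6}/2 + B_j^{-} - D_j^{+}$, as claimed; the lower bound is entirely symmetric with the signs flipped on the $\pm \frac{K}{2}g_{\alpha}$ shifts and on the final $n^{-1/6}$ term. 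The only mild subtlety — not really an obstacle — is bookkeeping the index shift between $\sum_{i=1}^j$ (terms indexed $t_{i-1}$) and $\sum_{i=0}^{j-1}$ as used in Claim~\ref{lem:intsum}, and making sure the various $O(n^{\gamma-1})$ error terms are comfortably dominated by $n^{\gamma-1/6}/2$; both are routine since $\gamma - 1 < \gamma - 1/6$. The main thing to get right is simply that the sign choices in $A_i^{\pm}$, $C_i^{\pm}$ are the ones that make $B_j^{-} - D_j^{+}$ the correct error term for the upper bound (and vice versa), which is exactly how the shifted variables were defined.
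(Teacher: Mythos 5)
Your proposal is correct and follows essentially the same route as the paper: substitute the definitions of $A_i^{\pm}, C_i^{\pm}$ into the telescoping expression, convert the deterministic Riemann sums to integrals via Claim~\ref{lem:intsum}, and use Claim~\ref{lem:intexp} to harvest the $-n^{\gamma-1/6}$ slack that absorbs the $O(n^{\gamma-1})$ discretization error. The sign bookkeeping you flagged is exactly the (only) point requiring care, and you resolved it correctly.
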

\begin{proof}
Let us first consider the upper bound:
\begin{align*}
  R(j) &= n^{\gamma}r(0)+\sum_{i=1}^j A_i-C_i \\
  &=  n^{\gamma}r(0)+ \sum_{i=1}^j \left(A_i^- + n^{\gamma-2}(r_A(t_{i-1})+ \frac{K}{2}g_{\alpha}(t_{i-1})) \right) - \sum_{i=1}^j \left(C_i^+ + n^{\gamma-2}(r_C(t_{i-1})- \frac{K}{2}g_{\alpha}(t_{i-1})) \right) \\
  &= B_j^- - D_j^+ + n^{\gamma}r(0) + n^{\gamma-2}\sum_{i=1}^j (r_A(t_{i-1})-r_C(t_{i-1})) + n^{\gamma-2}\sum_{i=1}^j Kg_{\alpha}(t_{i-1})
\end{align*}
Now we apply Claim~\ref{lem:intsum} with functions $r_A(t)-r_C(t)$ (a polynomial) and $Kg_{\alpha}(t)$ (a product of a polynomial and an exponential function). As $T\le \sqrt{\log n}$, their derivatives are clearly bounded by $O(\polylog n)$ on $[0,T]$.
\begin{align*}
  R(j) &\le B_j^- - D_j^+ + n^{\gamma}r(0) + n^{\gamma} \int_0^{t_j} (r_A(\tau)-r_C(\tau)) d\tau + n^{\gamma} \int_0^{t_j} Kg_{\alpha}(\tau)d\tau +O(n^{-1})\\
  &\le n^{\gamma}(r(t_j)+g_{\alpha}(t_j))-n^{\gamma-1/6}/2 + B_j^- - D_j^+
\end{align*}
using Claim~\ref{lem:intexp} and $n^{\gamma-1/6}+O(n^{-1}) \ge n^{\gamma-1/6}/2$ in the last step.

The lower bound comes from an analogous argument by changing the appropriate signs.
\end{proof}

Using this, we can estimate the probability that $R(j)$ deviates from its expectation:

\begin{COR} \label{cor:prob}
Suppose the numbers $\gamma, \alpha$ and the functions $R, r, r_A, r_C$ satisfy the conditions of Lemma~\ref{lem:bound}. Suppose furthermore that $B_j^{\pm}$ and $D_j^{\pm}$ are $(\eta_1,N_1)$-bounded and $(\eta_2,N_2)$-bounded martingale pairs, respectively, where $\eta_{\beta} N_{\beta}\le \eps$ and $\eta_{\beta}<N_{\beta}/10$ for $\beta=1,2$. Then the probability that $R(j)\not\in n^{\gamma}(r(t_j)\pm g_{\alpha}(t_j))$ is at most $4e^{-\frac{n^{2\gamma-1/3}}{50\eps j}}$.
\end{COR}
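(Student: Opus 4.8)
\emph{Proof proposal.} The statement should follow by simply feeding the deterministic two-sided bounds of Lemma~\ref{lem:bound} into Bohman's martingale concentration inequality, Lemma~\ref{lem:mart}. Set $a=n^{\gamma-1/6}/4$ and $m=j$. By the upper bound in Lemma~\ref{lem:bound}, if $R(j)>n^{\gamma}(r(t_j)+g_{\alpha}(t_j))$ then $B_j^- - D_j^+ > n^{\gamma-1/6}/2 = 2a$, which forces $B_j^-\ge a$ or $D_j^+\le -a$. Symmetrically, by the lower bound, if $R(j)<n^{\gamma}(r(t_j)-g_{\alpha}(t_j))$ then $B_j^+ - D_j^- < -2a$, so $B_j^+\le -a$ or $D_j^-\ge a$. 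Hence the event $R(j)\notin n^{\gamma}(r(t_j)\pm g_{\alpha}(t_j))$ is contained in the union of the four events $\{B_j^+\le -a\}$, $\{B_j^-\ge a\}$, $\{D_j^+\le -a\}$ and $\{D_j^-\ge a\}$.

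Next I would apply Lemma~\ref{lem:mart} to the $(\eta_1,N_1)$-bounded martingale pair $B_j^{\pm}$ and to the $(\eta_2,N_2)$-bounded pair $D_j^{\pm}$. Since $\eta_\beta\le N_\beta/10$ by hypothesis (and the side condition $a<\eta_\beta m$ is handled below), the lemma bounds each of the four probabilities above by $e^{-a^2/(3\eta_\beta jN_\beta)}$ with the appropriate $\beta\in\{1,2\}$. Substituting $a^2=n^{2\gamma-1/3}/16$ and using $\eta_\beta N_\beta\le\eps$, so that $3\eta_\beta jN_\beta\le 3\eps j$, each of these is at most $e^{-n^{2\gamma-1/3}/(48\eps j)}\le e^{-n^{2\gamma-1/3}/(50\eps j)}$; a union bound over the four events then gives the claimed $4e^{-n^{2\gamma-1/3}/(50\eps j)}$.

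With Lemmas~\ref{lem:bound} and \ref{lem:mart} already in hand this corollary is essentially bookkeeping, so I do not foresee a genuine obstacle. The one point needing a word of care is the remaining hypothesis $a<\eta_\beta m$ of Lemma~\ref{lem:mart}. For the submartingale-direction events $\{B_j^+\le -a\}$ and $\{D_j^+\le -a\}$ it can be ignored, since $(\eta_\beta,N_\beta)$-boundedness forces $B_j^+\ge-\eta_1 j$ and $D_j^+\ge-\eta_2 j$, so these probabilities are $0$ unless $a\le\eta_\beta j$; for the supermartingale-direction events one checks that $a<\eta_\beta j$ does hold for the values of $j$ to which the corollary is applied in the proofs of Proposition~\ref{prop:main}(a)--(e) (there $\eta_\beta$ is of order $n^{\gamma-2}$ up to a constant and those $j$ are polynomially large), while for the remaining very small $j$ the asserted bound exceeds $1$ and is vacuous. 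The other thing worth double-checking is the sign bookkeeping in the first paragraph --- that the pair $(B^-,D^+)$ governs the upper deviation of $R$ while $(B^+,D^-)$ governs the lower one --- which matches exactly how $A_i^{\pm}$ and $C_i^{\pm}$ were defined just before Lemma~\ref{lem:bound}.
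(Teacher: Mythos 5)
Your main argument matches the paper's proof exactly: Lemma~\ref{lem:bound} reduces the two-sided deviation of $R(j)$ to one-sided deviations of the four shifted sums $B_j^{\pm}$ and $D_j^{\pm}$, each of which is then controlled by Lemma~\ref{lem:mart} with $a=n^{\gamma-1/6}/4$, and a union bound over the four events gives $4e^{-n^{2\gamma-1/3}/(50\eps j)}$. The only place you go beyond the paper is your parenthetical treatment of the side condition $a<\eta m$ of Lemma~\ref{lem:mart}, which the paper silently skips; it is a reasonable thing to flag, but your resolution is not quite right. The bound $4e^{-n^{2\gamma-1/3}/(50\eps j)}$ \emph{decreases} as $j$ decreases, so for small $j$ it does not ``exceed $1$'' and is never vacuous --- the vacuity argument is backwards. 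Moreover, with $a\sim n^{\gamma-1/6}$ and $\eta_\beta\sim n^{\gamma-2}$ the requirement $a<\eta_\beta j$ translates to roughly $j\gtrsim n^{11/6}$, while the corollary is invoked for all $1\le j\le Tn^2$; so there really is a nontrivial range of $j$ where the stated hypothesis of Lemma~\ref{lem:mart} fails for the supermartingale events $\{B_j^-\ge a\}$, $\{D_j^-\ge a\}$ (your observation that the submartingale events then have probability zero is correct). Closing this would need either a direct deterministic verification that the bounds of Proposition~\ref{prop:main} hold automatically for such $j$, or a version of Bohman's inequality valid without that hypothesis. Since the paper itself does not address this, I would not count it as a gap relative to the paper's proof, but the justification you offer for it does not hold up.
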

\begin{proof}
Lemma~\ref{lem:bound} shows that $R(j)> n^{\gamma}(r(t_j)+g_{\alpha}(t_j))$ implies $n^{\gamma-1/6}/2 < B_j^- -D_j^+$, hence this event is contained in the union of the events $n^{\gamma-1/6}/4 < B_j^-$ and $-n^{\gamma-1/6}/4 > D_j^+$. A straightforward application of Lemma~\ref{lem:mart} then gives a bound of $e^{-\frac{n^{2\gamma-1/3}}{50\eps j}}$ on the probability of each event, thus $R(j)> n^{\gamma}(r(t_j)+g_{\alpha}(t_j))$ occurs with probability at most $2e^{-\frac{n^{2\gamma-1/3}}{50\eps j}}$. A similar argument using the other inequality of Lemma~\ref{lem:bound} gives the same bound on the probability of the event $R(j)< n^{\gamma}(r(t_j)-g_{\alpha}(t_j))$, finishing the proof.
\end{proof}

\subsection{Degrees} 

Recall that in this section, and also in the next four sections, we assume $\mG_{j-1}$ holds, i.e., the values of $D_v,F_v,X_{u,v},Y_{u,v}$ and $Z_{u,v}$ are all in the prescribed intervals during the first $j-1$ steps.

\begin{proof}[Proof of Proposition~\ref{prop:main}(a)]

Let $A_i$ be the indicator random variable of the event that an open triple at $v$ was successfully sampled in step $i$. Then $D_v(j)=\sum_{i=1}^j A_i$. The probability that $A_{i+1}=1$ is
\[  \frac{2pF_v(i)}{\sum_{w} F_w(i)} \in \frac{2c(f(t_i)\pm g_1(t_i))}{n^{3/2}(f(t_i)\pm g_1(t_i))}   \subs  \frac{1}{n^{3/2}} \left(2c\pm \frac K2e^{Kt_i}n^{-1/6} \right) \]
using Claim~\ref{lem:squeeze}.

Set
\[ A_i^{\pm}=A_i-\frac1{n^{3/2}} \left(2c\mp \frac K2e^{Kt_{i-1}}n^{-1/6} \right) \quad \mbox{and} \quad B_j^{\pm}=\sum_{i=1}^j A_i^{\pm}, \]
then $B_j^{\pm}$ is a $(\frac {3c}{n^{3/2}},1)$-bounded martingale pair. So if we define $C_i$ and $r_C(t)$ to be 0 for all $i$, then all the conditions of Corollary~\ref{cor:prob} are satisfied with the choice of $r_A(t)=2c$, $r(t)=d(t)$, $\gamma=1/2$, $\alpha=1$ and $\eps=3n^{-3/2}$. Hence the probability that $R(j)=D_v(j)$ is not in $\sqrt{n}(d(t_j)\pm g_1(t_j))$ is less than $4e^{\frac{-n^{1/6}}{\log n}}\le n^{-10}$, using $150j\le 150n^2\sqrt{\log n}\le n^2\log n$.
\end{proof}

\subsection{Open triples} 

\begin{proof}[Proof of Proposition~\ref{prop:main}(b)]

Here we break the one-step change in $F_v(i)$ into two parts: $A_i$ will be the gain in the open triples at $v$ caused by the $i$'th sample and $C_i$ will be the loss, so that we can write $F_v(j)=n-1+ \sum_{i=1}^j A_i-C_i$.

We may lose a particular open triple $uwv$ in two different ways: either if we sample it, or if we successfully sample another open triple with the same missing edge $vu$. There are at most $X_{u,v}\le 50\log n$ candidates for this other triple and a successful sample has probability $p=O(1/\sqrt{n})$, so the linearity of expectation gives
\[  \Exp[C_{i+1}] = \frac{2F_v(i)(1+O(\frac{\log n}{\sqrt{n}}))}{\sum_{w} F_w(i)} \in \frac{2(f(t_i)\pm g_1(t_i))(1+O(\frac{\log n}{\sqrt{n}}))}{n(f(t_i)\pm g_1(t_i))} \subs \frac1n \left(2\pm \frac K2e^{Kt_i}n^{-1/6} \right) \]
using Claim~\ref{lem:squeeze}

Set 
\[ C_i^{\pm}=C_i-\frac1n \left(2\mp \frac K2e^{Kt_{i-1}}n^{-1/6} \right) \quad \mbox{and} \quad D_j^{\pm}=\sum_{i=1}^j C_i^{\pm}, \]
then $D_0^{\pm},D_1^{\pm},\ldots$ is a $(\frac 3n, 50\log n)$-bounded martingale pair, because one sample can only ``break'' the open triples with the same missing edge, and there are at most codegree-many of them.

\medskip
On the other hand, as we have already mentioned in Section~\ref{sec:diff}, there are two ways to obtain new open triples at $v$. The contribution of a new edge $vu$ touching $v$ in step $i+1$ is $D_u(i) - X_{u,v}(i)$ because it creates an open triple at $v$ with any edge of $u$ except if the third edge is already there. Alternatively, a new edge incident to a neighbor $u$ of $v$ creates a new open triple unless it connects to another neighbor of $v$. There are at most $\sum_{u,u'\in D_v(i)} X_{u,u'}(i)$ open triples that could create an edge between two neighbors of $v$, so
\[  \Exp[A_{i+1}] = \frac{2p}{\sum F_w(i)}\left( \sum_{wu\in F_v(i)} D_u(i) - X_{u,v}(i) \right) + \frac{2p}{\sum F_w(i)}\left( \sum_{u\in D_v(i)} F_u(i) - \sum_{u,u'\in D_v(i)} O(X_{u,u'}(i))\right). \]
Note how we abuse our notation to also think of the quantities $D_v(i)$ and $F_v(i)$ as the set they count. So $u\in D_v(i)$ should be understood as a neighbor of $v$ and $wu\in F_v(i)$ refers to an open triple $vwu$. Using Claim~\ref{lem:squeeze} we get
\[  \Exp[A_{i+1}] \subs \frac{4c(d(t_i)\pm g_1(t_i))(f(t_i)\pm g_1(t_i))(1-O(\frac{\log n}{\sqrt{n}}))}{n(f(t_i)\pm g_1(t_i))} \subs \frac1n \left( 4cd(t_i) \pm \frac K2 e^{Kt_i}n^{-1/6} \right). \]

This means that for 
\[ A_i^{\pm}=A_i-\frac1n\left (4cd(t_{i-1})\mp \frac K2 e^{Kt_{i-1}}n^{-1/6}\right) \quad \mbox{and} \quad B_j^{\pm} =\sum_{i=1}^j A_{i}^{\pm}, \]
$B_0^{\pm},B_1^{\pm},\ldots$ is a martingale pair.

Next, we show that it is a $(\frac{\log n}{n},\sqrt{n}\log n)$-bounded martingale pair. Indeed, adding an edge $vw$ in step $i+1$ can increase the number of open triples at $v$ by at most $A_i\le D_w(i)$ whereas an edge $ww'$ not touching $v$ can only increase it by one. The upper bound then comes from $D_w(i)=O(\sqrt{n\log n}) \le \sqrt{n}\log n$ and $A_i\ge A_i^{\pm}$. On the other hand, $A_i^{\pm}$ is smallest when $A_i=0$. Observing that $4cd(t)\le 8c^2\sqrt{\log n}$ we see that the change is bounded from below by $(-\log n/n)$.

Therefore we can apply Corollary~\ref{cor:prob} with $r_A(t)=4cd(t)$, $r_C(t)=2$, $r(t)=f(t)$, $\gamma=1$, $\alpha=1$, and $\eps=\log^2 n/\sqrt{n}$ to show that the probability that $R(j)=F_v(j)+1$ (or $F_v(j)$) is not in the interval $n\big(f(t_j)\pm g_1(t_j)\big)$ is at most $4e^{-n^{1/6}/\log^3 n}\le n^{-10}$.
\end{proof}

\subsection{3-walks} 

\begin{proof}[Proof of Proposition~\ref{prop:main}(d)]

Once again, we break the one-step change in $Y_{u,v}(i)$ into two parts: $A_i$ will be the gain in the open 3-walks from $u$ to $v$ caused by the $i$'th sample and $C_i$ will be the loss, so we can write $Y_{u,v}(j)= \sum_{i=1}^j A_i-C_i$.

We lose a particular 3-walk $uww'v$ either if we sample its open triple $uww'$, or if we add the missing edge $uw'$ by successfully sampling some other triple (as before, the latter event is unlikely since the codegrees are small). Then the linearity of expectation and Claim~\ref{lem:squeeze} gives
\[ \Exp[C_{i+1}]= \frac{2Y_{u,v}(i)(1+O(\frac{\log n}{\sqrt{n}}))}{\sum_w F_w(i)} \in \frac{2(y(t)\pm g_2(t))(1+O(\frac{\log n}{\sqrt{n}}))}{n^{3/2}(f(t)\pm g_1(t))} \subs \frac{1}{n^{3/2}}\left(2d(t) \pm \frac K2 g_2(t)\right). \]
So defining
\[ C_i^{\pm}=C_i-\frac{1}{n^{3/2}}\left(2d(t_{i-1}) \mp \frac K2 g_2(t_{i-1})\right) \quad \mbox{and} \quad D_j^{\pm}=\sum_{i=1}^j C_i^{\pm}, \]
we get that $D_0^{\pm},D_1^{\pm},\ldots$ is a $(\frac {\log n}{n^{3/2}},50\log n)$-bounded martingale pair.

\medskip
Now let us look at $A_{i+1}$, the number of ways a new open 3-walk $uww'v$ can be created in step $i+1$. We follow the analysis described in Section~\ref{sec:diff}. If $uw$ is the new edge, then we need to count the 4-walks $utww'v$ in $G_i$ where $w'$ is not $u$ or a neighbor of $u$, and $utw$ is open. Let $N$ be the set of such candidates for $w'$, then $|N|=D_v(i)-O(\log n)$, and the expected contribution to $A_{i+1}$ of this type is
\[  \frac{2p\sum_{w'\in N} Y_{u,w'}(i)}{\sum_r F_r(i)} \in \frac{2c\Big(d(t)\pm g_1(t) +O(\frac{\log n}{\sqrt{n}})\Big)\Big(y(t)\pm g_2(t)\Big)}{n^{3/2}(f(t)\pm g_1(t))} \subs \frac{1}{n^{3/2}}\left( \frac{2cd(t)y(t)}{f(t)} \pm \frac{K}{6} g_2(t) \right)   \]
Strictly speaking, we are using the linearity of expectation over the indicator variables for each fixed 3-walk $uww'v$. The probability that this walk is created is the number of $t$'s such that $utww'$ is an open 3-walk in $G_i$, divided by the number of open triples.

We similarly get that the expected contribution where $ww'$ is the new edge is
\[ \frac{2p\sum_{w'\in N} Y_{w',u}(i)}{\sum_r F_r(i)} \in \frac{1}{n^{3/2}}\left( \frac{2cd(t)y(t)}{f(t)} \pm \frac{K}{6} g_2(t) \right), \]
whereas new open 3-walks where $w'v$ is the new edge come from open 4-walks $uww'tv$ in $G_i$, so the expected contribution of this type is
\[ \frac{2pZ_{u,v}(i)}{\sum_r F_r(i)} \in \frac{2c(z(t)\pm g_2(t))}{n^{3/2}(f(t)\pm g_1(t)} \subs   \frac{1}{n^{3/2}}\left( \frac{2cz(t)}{f(t)} \pm \frac{K}{6} g_2(t)\right). \]

Putting all of these together, we see that for
\[A_i^{\pm} = A_i - \frac{1}{n^{3/2}}\left( \frac{2c\big(2d(t_{i-1})y(t_{i-1})+z(t_{i-1})\big)}{f(t_{i-1})} \pm \frac{K}{2}g_2(t_{i-1}) \right), \]
$B_j^{\pm}=\sum_{i=1}^j A_i^{\pm}$ is a martingale pair. In fact it is $(\frac{\log^2 n}{n^{3/2}},50\log n)$-bounded, since a new edge can contribute at most codegree-many new 3-walks.

Now we can apply Corollary~\ref{cor:prob} with $r_A(t)=2c\big(2d(t)y(t)+z(t)\big)/f(t)$, $r_C(t)=2y(t)/f(t)$, $r(t)=y(t)$ (recall the differential equation that $y$ satisfies to see that $r'=r_A-r_C$), $\gamma=1/2$, $\alpha=2$ and $\eps=\log^4 n/n^{3/2}$ to show that the probability that $R(j)=Y_{u,v}(j)$ is not in the interval $\sqrt{n}\big(y(t_j)\pm g_2(t_j) \big)$ is at most $4e^{-n^{1/6}/\log^5 n}\le n^{-10}$.
\end{proof}

\subsection{4-walks} 

\begin{proof}[Proof of Proposition~\ref{prop:main}(e)]

This time we define $A_i$ to be the number of new open 4-walks created in step $i$ and $C_i$ to be the number of open 4-walks we lose in step $i$, so that $Z_{u,v}(j)=n-2 +\sum_{i=1}^j A_i-C_i$.

Once again, we lose an open 4-walk $uww'w''v$ if one of its open triples $uww'$ or $w'w''v$ is sampled, or if one of their missing edges $uw'$ or $w'v$ is added through a successful sample of a different open triple. Hence we get, using Claim~\ref{lem:squeeze}
\[ \Exp[C_{i+1}]= \frac{4Z_{u,v}(i)(1+O(\frac{\log n}{\sqrt{n}}))}{\sum_w F_w(i)} \in \frac{4(z(t)\pm g_2(t))(1+O(\frac{\log n}{\sqrt{n}}))}{n(f(t)\pm g_1(t))} \subs \frac{1}{n}\left(\frac{4z(t)}{f(t)} \pm \frac K2 g_2(t)\right). \]
So defining
\[ C_i^{\pm}=C_i-\frac{1}{n}\left(\frac{4z(t_{i-1})}{f(t_{i-1})} \mp \frac K2 g_2(t_{i-1})\right) \quad \mbox{and} \quad D_j^{\pm}=\sum_{i=1}^j C_i^{\pm}, \]
we get that $D_0^{\pm},D_1^{\pm},\ldots$ is a $(\frac {\log^2 n}{n},2500\log^2 n)$-bounded martingale pair, because an added edge of the form $uw'$ or $w'v$ can ruin at most $X_{u,w'}\cdot X_{w',v}\le (50\log n)^2$ open 4-paths.

On the other hand, the analysis in Section~\ref{sec:diff} shows that a new open 4-walk $uww'w''v$ can be created in four different ways, based on which one of the four edges was added in step $i+1$. In the case when $uw$ is the new edge, we need to count the 5-walks $utww'w''v$ where $utw$ and $vw''w'$ are open, and $w'$ is not $u$ or a neighbor of $u$. Let $M$ be the set of such edges $w'w''$ for fixed $u$ and $v$. Then 
\[ |M|=F_{v}(i)- Y_{v,u}(i)-O(X_{v,u}(i))=F_{v}(i)-O(\sqrt{n\log^3 n}), \]
hence the expected contribution in this case is
\[  \frac{2p\sum_{w'w''\in M} Y_{u,w'}(i)}{\sum_r F_r(i)} \in \frac{2c\Big(f(t)\pm g_1(t) +O(\frac{\log^2 n}{\sqrt{n}})\Big)\Big(y(t)\pm g_2(t)\Big)}{n(f(t)\pm g_1(t))} \subs \frac 1n \left( \frac{2cf(t)y(t)}{f(t)} \pm \frac{K}{8} g_2(t) \right).   \]

But the remaining three cases are essentially the same, we only need to switch $u$ and $v$ or the two indices of the variables $Y$. This means that
\[ \Exp[A_{i+1}] \in \frac 1n \left( \frac{8cf(t)y(t)}{f(t)} \pm \frac{K}{2} g_2(t) \right), \]
so we can define
\[A_i^{\pm} = A_i - \frac 1n \left( \frac{8cf(t_{i-1})y(t_{i-1})}{f(t_{i-1})} \mp \frac{K}{2} g_2(t_{i-1}) \right) \quad \mbox{and} \quad B_j^{\pm}=\sum_{i=1}^j A_i^{\pm}, \]
where $B_0^{\pm},B_1^{\pm},\ldots$ is a $(\frac{\log^3 n}{n},3\sqrt{n}\log^2 n)$-bounded martingale pair. This is because a new edge of the form $uw$ can add at most $D_v(i)X_{w,w''}(i)= O(\sqrt{n}\log^{3/2} n) \le \sqrt{n}\log^2 n$ new 4-walks and the same bound works for an edge touching $v$, whereas a new edge not touching $u$ and $v$ creates at most $100\log n$ open 4-walks: at most codegree-many in both of the positions $ww'$ and $w'w''$.

Now we apply Corollary~\ref{cor:prob} with $r_A(t)=8cf(t)y(t)/f(t)$, $r_C(t)=4z(t)/f(t)$, $r(t)=z(t)$ (the differential equation for $z$ implies $r'=r_A-r_C$), $\gamma=1$, $\alpha=2$ and $\eps=\log^6 n/\sqrt{n}$ to show that the probability that $R(j)=n+Z_{u,v}(j)$ is not in the interval $n\big(z(t_j)\pm g_2(t_j) \big)$ is at most $4e^{-n^{1/6}/\log^7 n}\le n^{-10}$.
\end{proof}

\subsection{Codegrees} 

\begin{proof}[Proof of Proposition~\ref{prop:main}(c)]

Let $A_i=X_{u,v}(i)-X_{u,v}(i-1)$ be the increase in the codegree of $u$ and $v$ in a step so that $X_{u,v}(j)=1+\sum_{i=1}^j A_i$. It is easy to see that $A_{i+1}$ is the indicator random variable of the event that the open triple of an open 3-walk from $u$ to $v$ or from $v$ to $u$ is successfully sampled in step $i+1$. The probability of this event is
\[  \frac{2p(Y_{u,v}(i)+Y_{v,u}(i))}{\sum_{w} F_w(i)} \in \frac{4c(y(t)\pm g_2(t))}{n^2 (f(t)\pm g_1(t))} \subs \frac{1}{n^2}\left(\frac{4cy(t)}{f(t)} \pm \frac{K}{2}g_2(t) \right). \]

So if we set $A_i^-=A_i-\frac{1}{n^2}\left(\frac{4cy(t_{i-1})}{f(t_{i-1})} + \frac{K}{2}g_2(t_{i-1}) \right)$ then $B_j^-=\sum_{i=1}^j A_i^-$ is a supermartingale and it is $(\frac{10\sqrt{\log n}}{n^2},1)$-bounded. Now we can apply Lemma~\ref{lem:bound} with $\gamma=0$, $\alpha=2$, $r_A(t)=\frac{4cy(t)}{f(t)}=4cd(t)=8c^2t$, $r_C(t)=0$ and $r(t)=4c^2t^2+1$ to $R(j)=X_{u,v}(j)$. Then the first inequality gives
\[ X_{u,v}(j) \le 1+ 4c^2t_j^2+ g_2(t_j) + B_j^-. \]

Therefore (keeping in mind that $t_j\le \sqrt{\log n}$ and $c\le 1$) we see that if $X_{u,v}(j)>50\log n$ then $B_j^- > 25\log n$. But by Lemma~\ref{lem:mart} this has probability at most
\[ e^{-\frac{25^2\log^2 n}{30 \log n}} \le e^{-10\log n} =n^{-10}\]
for any $j\le n^2\sqrt{\log n}$, finishing our claim.
\end{proof}

\section{The second phase}  \label{sec:phasetwo}

In this section we analyze the second phase of the process and prove our main result, the lower and upper bounds on the threshold probability. Unlike in the first phase, where we made one step at a time, here we expose triples in rounds. In a round we simultaneously sample all the currently open triples, and then add the edges accordingly.

Let us adapt our notation to the second phase as follows. From now on $D_v(i),~i=0,1,\ldots$ will denote the degree of the vertex $v$ after $i$ rounds in the second phase. For example, $D_v(0)$ is the degree of $v$ at the end of the first phase, i.e., $D_v(Tn^2)$ with the old notation. We similarly re-define the other variables $F_v,X_{u,v},Y_{u,v}$ and $Z_{u,v}$, and let $G_i$ denote the graph after the $i$'th round.

\medskip
We will make use of the following Chernoff-type inequalities (see, e.g., \cite{JLRBOOK}).
\begin{CLAIM} \label{lem:chern}
Let $X\sim \Bin (n,p)$ be a binomial random variable. Then
\begin{enumerate}
  \item $\Prb[X > np+a] \le e^{-\frac{a^2}{2(np+a/3)}}$ and
  \item $\Prb[X < np-a] \le e^{-\frac{a^2}{2np}}.$
\end{enumerate}
\end{CLAIM}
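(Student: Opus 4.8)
The plan is to establish both inequalities by the classical exponential-moment (Chernoff) method, and then to reduce each of them to an elementary scalar inequality for the rate function $h(x) = (1+x)\log(1+x) - x$. Both bounds are entirely standard and can be quoted verbatim from \cite{JLRBOOK}; I sketch the derivation only for completeness.

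First I would handle the upper tail. Write $X = \sum_{i=1}^n \xi_i$ with $\xi_1,\dots,\xi_n$ independent Bernoulli$(p)$ variables, set $\mu = np$, and for any $\lambda > 0$ apply Markov's inequality to $e^{\lambda X}$:
\[ \Prb[X > \mu + a] \le e^{-\lambda(\mu+a)}\,\Exp[e^{\lambda X}] = e^{-\lambda(\mu+a)}\big(1+p(e^\lambda-1)\big)^n \le \exp\!\big(\mu(e^\lambda-1) - \lambda(\mu+a)\big), \]
where the last step uses $1+x\le e^x$. Writing $\delta = a/\mu$ and optimizing over $\lambda$ — the optimal choice being $\lambda = \log(1+\delta)$ — collapses the right-hand side to $\exp(-\mu\,h(\delta))$. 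The first claim then follows from the inequality $h(\delta) \ge \dfrac{\delta^2}{2(1+\delta/3)}$ for all $\delta \ge 0$, since substituting $\delta = a/\mu$ turns $\mu\,h(\delta)$ into $\dfrac{a^2}{2(np+a/3)}$.

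For the lower tail the same idea, now applied to $e^{-\lambda X}$ with $\lambda > 0$, gives $\Prb[X < \mu - a] \le \exp\!\big(\mu(e^{-\lambda}-1) + \lambda(\mu-a)\big)$; the choice $\lambda = -\log(1-\delta)$ with $\delta = a/\mu$ reduces the exponent to $-\mu\big(\delta + (1-\delta)\log(1-\delta)\big)$, and the second claim is then immediate from the simpler bound $\delta + (1-\delta)\log(1-\delta) \ge \delta^2/2$, valid for $0 \le \delta \le 1$.

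The only genuine content here is verifying the two scalar inequalities for $h$ — the first on $[0,\infty)$, the second on $[0,1]$ — which is routine calculus (compare power series at $0$, or check that both sides agree to second order at the origin and then compare derivatives); there is no combinatorial obstacle. Should one prefer to avoid even this, the statement is precisely the standard Chernoff bound for binomial variables recorded in \cite{JLRBOOK}, and that is the form in which we shall invoke it throughout the rest of the paper.
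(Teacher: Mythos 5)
Your derivation is correct, and both scalar inequalities $h(\delta)\ge \delta^2/(2(1+\delta/3))$ and $\delta+(1-\delta)\log(1-\delta)\ge\delta^2/2$ are the standard lemmas used to pass from the optimized Chernoff exponent to the stated bounds. The paper itself offers no proof here — it simply cites \cite{JLRBOOK} — so your sketch is the same textbook argument, just written out in full.
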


\subsection{The lower bound}

Suppose $c<\frac{1}{2}$ is some fixed constant. Before we start the second phase, we need to decide how many steps the first phase should take. Recall that $f(t)$ has a root at $T_0= \frac{1-\sqrt{1-4c^2}}{4c^2}$ and that it is monotone decreasing in the interval $[0,T_0]$. It is easy to check that $d(T_0)<1$, so fix a positive constant $\delta < 1-d(T_0)$ and choose $\eps>0$ so that $\frac{c\eps}{1-2c} < \delta$. We define the stopping time $T$ to be in the interval $[0,T_0]$ so that $f(T)=\eps/2$. Hence if we apply Theorem \ref{thm:diffeq} with this $T$, we get that after $Tn^2$ steps
\begin{itemize}
\item $D_v(0) \le (d(T)+g_1(T))\sqrt{n} \le (1-\delta)\sqrt{n}$ and
\item $F_v(0) \le (\eps/2+g_1(T))n\le \eps n$
\end{itemize}
for every vertex $v$. At this point, we move on to the second phase of the process.

The plan is to show that the second phase ends in $O(\log n)$ rounds, while all the degrees stay below $\sqrt{n}$. This would imply that the final graph has at most $n\sqrt{n}$ edges, in particular, it is not complete. The following statement bounds the degrees of the vertices in the first $O(\log n)$ rounds. Showing that in the meantime the second phase gets stuck will be an easy corollary.

\begin{CLAIM} \label{lem:lower}
Let $m=4\log_{1/2c}{n}$. Then, with high probability, $D_v(i) < \sqrt{n}$  for every vertex $v$ and $0\le i \le m$.
\end{CLAIM}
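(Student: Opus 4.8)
The plan is to track the degrees round by round and show by induction on $i$ that $D_v(i)$ stays below $\sqrt n$, with the key point being that the degree growth is essentially geometric with ratio less than $1$, so the total increase is bounded by a convergent geometric series starting from $(1-\delta)\sqrt n$. First I would set up the one-round recursion. In round $i+1$, each open triple is sampled independently, successful with probability $p=c/\sqrt n$, and the degree of $v$ can only grow through open triples at $v$. So, conditioned on the graph $G_i$, the increase $D_v(i+1)-D_v(i)$ is stochastically dominated by $\Bin(F_v(i),p)$, with mean at most $pF_v(i)$. Thus the essential quantity to control is $F_v(i)$, the number of open triples at $v$, which I would bound crudely by: (number of new edges at neighbors of $v$ created in round $i$) plus (a correction). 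More usefully, an open triple at $v$ in $G_{i+1}$ is a path $vwu$ with $vw,wu\in G_{i+1}$ but $vu\notin G_{i+1}$; a new open triple must use at least one edge added in round $i$ (or a bit earlier), so $F_v(i+1)$ is at most roughly $\sum_{w\in N(v)}(\text{edges added at }w)+\sum_{\text{new edges }vw}D_w(i+1)$, which in turn is controlled by the degrees. I would make this precise so that $F_v(i+1) \le 2\sqrt n\cdot(\text{max degree increase in round }i\text{ over all vertices})$ up to lower-order terms.

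Next I would run the induction. Set $\Delta_i = \max_v\big(D_v(i)-D_v(i-1)\big)$ for $i\ge 1$, with $\Delta_0$ standing for $F_v(0)/\text{(something)}$; more cleanly, I would prove by induction the pair of bounds $F_v(i) \le \eps n\cdot (2c)^{i}\cdot(1+o(1))^i$ and $D_v(i) \le (1-\delta)\sqrt n + \sum_{k=1}^{i} pF_v(k-1)\cdot(1+o(1))$. For the $F$-bound: given $F_v(i-1)\le \eps n(2c)^{i-1}(1+o(1))^{i-1}$ for all $v$, the number of edges added at any fixed vertex $w$ in round $i$ is $\Bin(F_w(i-1),p)$, which by Claim~\ref{lem:chern}(1) is at most $pF_w(i-1)+O(\log n) \le 2c\sqrt n (2c)^{i-1}(1+o(1))^{i-1}+O(\log n)$ with probability $1-n^{-\omega(1)}$; summing the contributions to $F_v(i)$ over the $\le\sqrt n$ neighbors $w$ of $v$ and the $\le\sqrt n$ newly added edges $vw$ (each contributing at most $D_w(i)\le\sqrt n$ new triples) gives $F_v(i)\le \eps n(2c)^{i}(1+o(1))^{i}$, absorbing the $O(\sqrt n\log n)$ error term into the $(1+o(1))$ factor since $\eps n(2c)^i \gg \sqrt n\log n$ throughout the range $i\le m$ (this is exactly why $m=4\log_{1/2c}n$ is the right cutoff — below it, $(2c)^i \ge n^{-2}$ roughly, keeping everything polynomially large... actually one must double-check the range, see below). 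For the $D$-bound: again by Claim~\ref{lem:chern}(1), $D_v(i)-D_v(i-1)\le pF_v(i-1)+O(\log n)$ whp, so $D_v(i) \le (1-\delta)\sqrt n + \sum_{k\ge 1} c\sqrt n\,\eps(2c)^{k-1}(1+o(1)) + O(i\log n) \le (1-\delta)\sqrt n + \frac{c\eps}{1-2c}\sqrt n(1+o(1)) + O(\log^2 n)$. Since we chose $\eps$ so that $\frac{c\eps}{1-2c}<\delta$, this is strictly below $\sqrt n$ for large $n$.

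Finally, I would take a union bound: each application of the Chernoff bound fails with probability $n^{-\omega(1)}$ (picking the deviation term to be, say, $\log^2 n$ against a mean that is at most $n^{3/2}$, giving failure probability at most $e^{-\Omega(\log^3 n / n^{1/2})}$... here one must be a little careful — when $pF_v$ is large, the additive $\log n$ slack is too weak, so instead I would use a multiplicative Chernoff bound, $\Pr[X > (1+\log^{-1}n)\,pF_v] \le e^{-\Omega(pF_v/\log^2 n)}$, which is $n^{-\omega(1)}$ as long as $pF_v \ge \log^4 n$, i.e. in the regime where $F_v$ is polynomially large; for the last few rounds where $F_v$ has dropped below polylog, one switches back to the additive bound with slack $\log n$). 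Then union bounding over $v\in[n]$ and $i\le m = O(\log n)$ loses only a factor of $n\log n$, so the whole event holds whp. I expect the main obstacle to be \emph{bookkeeping the error terms cleanly}: one must verify that the $(1+o(1))^i$ multiplicative slack really stays $1+o(1)$ across all $i\le m$ (so that the geometric sum's constant $\frac{c\eps}{1-2c}$ is not inflated past $\delta$), and simultaneously that the additive $O(i\log n)$ or $O(\log^2 n)$ errors are negligible against $\delta\sqrt n$ — which forces the concentration arguments to be split into a "polynomially-large $F_v$" regime (multiplicative Chernoff) and a "small $F_v$" regime (additive Chernoff), with the threshold $m=4\log_{1/2c}n$ chosen precisely so that $F_v$ has not yet shrunk below this transition in a harmful way.
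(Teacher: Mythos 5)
Your overall plan matches the paper's: prove by induction simultaneous upper bounds on $D_v(i)$ and $F_v(i)$, use stochastic domination of the degree increment by $\mathrm{Bin}(F_v(i),p)$, apply Claim~\ref{lem:chern}, and union-bound over vertices and rounds, with the geometric sum $\sum(2c)^k c\eps$ giving the $\frac{c\eps}{1-2c}<\delta$ margin. But your error bookkeeping has a genuine gap that the paper's precise choice of error terms is designed to avoid, and the regime-splitting you describe does not actually close it.

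The problem is in the claimed $F$-bound $F_v(i)\le \eps n(2c)^i(1+o(1))^i$. You assert that the additive $O(\sqrt n\log n)$ error can be absorbed into the $(1+o(1))^i$ factor because $\eps n(2c)^i\gg\sqrt n\log n$ ``throughout $i\le m$''. This is false: with $m=4\log_{1/2c}n$ one has $(2c)^m=n^{-4}$, so $\eps n(2c)^i$ drops below $\sqrt n\log n$ already around $i\approx\tfrac12\log_{1/2c}n$, after which the additive concentration error dominates and a purely multiplicative slack cannot absorb it. Your proposed fix --- switch to an additive Chernoff bound with $\mathrm{polylog}$ slack when $F_v$ is small --- does not rescue the induction either: an $O(\log^2 n)$ per-round degree increase feeds back into $F_v(i+1)\le 2\sqrt n\cdot O(\log^2 n)=O(\sqrt n\,\mathrm{polylog}\,n)$, which again exceeds $\eps n(2c)^{i+1}(1+o(1))^{i+1}$ in those later rounds, breaking the inductive hypothesis. (Separately, your multiplicative slack $1+\log^{-1}n$ over $m=\Theta(\log n)$ rounds compounds to a constant factor, not $1+o(1)$; this is repairable by shrinking $\eps$, but the $F$-bound issue above is not.)

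The missing idea is to carry a \emph{fixed additive floor} in the $F$-bound rather than trying to push $F_v(i)$ all the way down geometrically. The paper proves $F_v(i)\le\big((2c)^i\eps+2n^{-1/6}\big)n$ and $D_v(i)\le\big(1-\delta+(1+2c+\cdots+(2c)^{i-1})c\eps+i\,n^{-1/6}\big)\sqrt n$. The Chernoff slack for the degree increment is taken to be $(1-2c)n^{1/3}=(1-2c)n^{-1/6}\sqrt n$, which is calibrated so that the degree increase in one round is at most $\big((2c)^ic\eps+n^{-1/6}\big)\sqrt n$ (note $2c+{(1-2c)}=1$), and then the double-count $F_v(i+1)\le 2\cdot(\text{max degree increase})\cdot\sqrt n$ returns exactly $\big((2c)^{i+1}\eps+2n^{-1/6}\big)n$, so the floor $2n^{-1/6}n$ is a fixed point of the recursion. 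Because the $F$-bound never drops below $2n^{5/6}$, a single additive Chernoff bound with slack $n^{1/3}$ works uniformly across all rounds (the failure probability is $e^{-\Omega(n^{1/6})}$ in every regime), eliminating the need for any case split; and the accumulated additive error in the degree bound, $m\cdot n^{-1/6}\sqrt n=O(n^{1/3}\log n)$, is $o(\sqrt n)$, so the final bound $D_v(i)<\sqrt n$ still holds.
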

\begin{proof}
We will prove by induction that with high probability
\begin{itemize}
\item $D_v(i) \le \left(1-\delta + (1+2c+\ldots+(2c)^{i-1})c\eps+i\cdot n^{-1/6}\right)\sqrt{n}$ \quad and
\item $F_v(i) \le ((2c)^i\eps+2n^{-1/6})n$
\end{itemize}
hold for every vertex $v$ and $1\le i\le m$. Note that, by our choice of $\eps$, the bound on the degrees is less than $\left(1-\delta + \frac{c\eps}{1-2c}+i\cdot n^{-1/6}\right)\sqrt{n} <\sqrt{n}$.

To proceed with the induction, we condition on the event that the bounds hold for $i$ and then estimate the probability that they fail for $i+1$ for some vertex $v$.

First we show that the degree of each vertex increases by at most $\left((2c)^ic\eps+n^{-1/6}\right)\sqrt{n}$ in round $i+1$. Indeed, the number of new edges that touch the vertex $v$ is stochastically dominated by the binomial distribution $\Bin\left(F_v(i),\frac{c}{\sqrt{n}}\right)$. Hence, by the first Chernoff-bound in Claim~\ref{lem:chern},
\[  \Prb\left[ D_v(i+1)-D_v(i) >  F_v(i)\frac{c}{\sqrt{n}} + (1-2c)n^{1/3} \right] < e^{-\Omega(n^{1/6})},  \]
so a union bound over all the vertices shows that the first bound fails in round $i+1$ with probability at most $e^{-\Omega(n^{1/7})}$.

The second inequality follows from the first one by an easy counting argument. Since we sample all the current open triples every round, the ones counted in $F_v(i+1)$ are all new triples, i.e., they contain at least one new edge added in round $i+1$. Now an open triple either has a new edge incident to $v$ or not. If it does, we can choose it in at most $\left((2c)^ic\eps+n^{-1/6}\right)\sqrt{n}$ ways, and then extend each choice in at most $\sqrt{n}$ ways to get a triple (as all degrees are below $\sqrt{n}$). If not, then we first choose a neighbor of $v$ and then a new incident edge. Consequently, the total number of open triples at $v$ is
\[ F_v(i+1)\le 2\cdot\left((2c)^ic\eps+n^{-1/6}\right)\sqrt{n}\cdot \sqrt{n}=((2c)^{i+1}\eps+2n^{-1/6})n. \]
Taking a union bound over all the $m$ rounds then completes the proof.
\end{proof}

\begin{COR} \label{cor:lower}
Let $Q(i)$ be the total number of open triples after $i$ rounds. Then $Q(m) = 0$ with high probability.
\end{COR}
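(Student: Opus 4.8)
The plan is to track the \emph{total} number $Q(i)$ of open triples, show that it contracts by a factor close to $2c<1$ per round as long as all degrees stay below $\sqrt n$, and then deduce $Q(m)=0$ from a first-moment computation together with Markov's inequality. (This cannot be read off from the bound $F_v(i)\le((2c)^i\eps+2n^{-1/6})n$ of Claim~\ref{lem:lower}, which only yields $F_v(m)=O(n^{5/6})$ and hence $Q(m)=O(n^{11/6})$ --- far from $0$; it is the global count that contracts cleanly.)

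The contraction rests on two simple facts. First, $Q(i)=\tfrac12\sum_v F_v(i)$, since an open triple $uvw$ with missing edge $uw$ is counted once in $F_u(i)$ and once in $F_w(i)$; combined with the first-phase estimate $F_v(0)\le\eps n$ this gives $Q(0)\le\eps n^2/2$. Second, every open triple present in $G_{i+1}$ contains an edge that was added in round $i+1$: it has exactly two edges and has not been sampled, so it had at most one edge in $G_i$ --- otherwise it would already have been sampled in round $i+1$ or earlier --- and hence gained an edge in round $i+1$. Thus, writing $W_{i+1}$ for the number of edges added in round $i+1$ and using that when all degrees in $G_{i+1}$ are below $\sqrt n$ each edge lies in fewer than $2\sqrt n$ paths of length two, we get $Q(i+1)\le 2\sqrt n\,W_{i+1}$. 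Finally, the triples sampled in round $i+1$ are precisely those that first acquired two edges during round $i$, so they are fresh: $W_{i+1}$ is stochastically dominated by $\Bin(Q(i),p)$ and $\Exp[W_{i+1}\mid\mathcal F_i]\le p\,Q(i)=\tfrac{c}{\sqrt n}Q(i)$. Together these give a one-round contraction by the factor $2c$.

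To handle the fact that Claim~\ref{lem:lower} only supplies the degree bounds with high probability, I would introduce the nested, $\mathcal F_i$-measurable events $\mathcal E_i=\{F_v(0)\le\eps n\ \forall v\}\cap\{D_v(j)<\sqrt n\ \forall v,\ \forall j\le i\}$. On $\mathcal E_{i+1}$ we have $Q(i+1)\le 2\sqrt n\,W_{i+1}$, and since $\mathbf 1_{\mathcal E_{i+1}}\le\mathbf 1_{\mathcal E_i}$ with $\mathbf 1_{\mathcal E_i}$ being $\mathcal F_i$-measurable, $\Exp[Q(i+1)\mathbf 1_{\mathcal E_{i+1}}]\le\Exp[2\sqrt n\,W_{i+1}\mathbf 1_{\mathcal E_i}]=2\sqrt n\,\Exp[\mathbf 1_{\mathcal E_i}\,\Exp[W_{i+1}\mid\mathcal F_i]]\le 2c\,\Exp[Q(i)\mathbf 1_{\mathcal E_i}]$. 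Iterating $m$ times and using $Q(0)\mathbf 1_{\mathcal E_0}\le\eps n^2/2$ gives $\Exp[Q(m)\mathbf 1_{\mathcal E_m}]\le(2c)^m\eps n^2/2$, and since $m=4\log_{1/2c}n$ forces $(2c)^m=n^{-4}$, this is at most $\eps/(2n^2)=o(1)$.

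It remains to combine the pieces: by Markov's inequality $\Prb[Q(m)\ge 1]\le\Prb[\overline{\mathcal E_m}]+\Exp[Q(m)\mathbf 1_{\mathcal E_m}]$, where $\Prb[\overline{\mathcal E_m}]=o(1)$ by Claim~\ref{lem:lower} and the first-phase bounds, and the second term is $o(1)$ by the above; since $Q(m)$ is a non-negative integer, $Q(m)=0$ whp. The only genuinely delicate point is the bookkeeping with the events $\mathcal E_i$ needed to make the conditional-expectation step rigorous; the two combinatorial inputs --- that every open triple in $G_{i+1}$ uses a round-$(i+1)$ edge, and that the triples sampled in round $i+1$ are fresh Bernoulli$(p)$ samples --- are routine to verify.
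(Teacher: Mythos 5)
Your proposal is correct and follows essentially the same argument as the paper: every open triple surviving a round contains an edge added in that round, each new edge lies in at most $2\sqrt n$ open triples by the degree bounds of Claim~\ref{lem:lower}, and the number of new edges is dominated by $\Bin(Q(i),p)$, giving a per-round contraction $\Exp[Q(i+1)]\le 2c\,\Exp[Q(i)]$ and then $\Prb[Q(m)>0]=o(1)$ by Markov. Your explicit bookkeeping with the nested events $\mathcal E_i$ makes the conditioning on the whp degree bounds more rigorous than the paper's terser phrasing (which also starts from the cruder $Q(0)\le n^3$), but it is the same proof.
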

\begin{proof}
If a triple is open after the $i$'th round, then it contains at least one new edge. Of course, the number of open triples containing some fixed new edge $uv$ is at most $D_v(i)+D_u(i)\le 2\sqrt{n}$, whp. On the other hand, the number of new edges cannot exceed the number of positive samples in the $i$'th round, distributed as $\Bin(Q(i-1),\frac{c}{\sqrt{n}})$. Putting these together, this means that $Q(0),\ldots,Q(m)$ is a sequence of random variables where $Q(i)$ is stochastically dominated by $2\sqrt{n}\cdot\Bin(Q(i-1),\frac{c}{\sqrt{n}})$. In particular,
\[
\Exp[Q(i)]=\Exp[\Exp[Q(i)|Q(i-1)]] \le \Exp[2c Q(i-1)]=2c\Exp[Q(i-1)].
\]
Using $Q(0)\le n^3$, a simple application of Markov's inequality gives
\[ \Prb[Q(m)>0]\le \Exp[Q(m)]\le (2c)^m Q(0)\le n^{-4}\cdot n^3 =o(1). \]
\end{proof}

\begin{proof}[Proof of Theorem~\ref{thm:main}, part 2]

Corollary~\ref{cor:lower} shows that whp the process runs out of open triples after at most $m$ rounds in the second phase. According to Claim~\ref{lem:lower}, at this final stage all vertices have degree at most $\sqrt{n}$, i.e., the graph has at most $\frac{n^{3/2}}{2}$ edges whp.
\end{proof}

\subsection{The upper bound}
Suppose $\frac 12<c\le 1$ is fixed. Then we can run the first phase all the way, for $n^2\sqrt{\log n}$ steps. Indeed, as the function $f(t)$ has a global minimum of $f\left(\frac{1}{4c^2}\right) = 1-\frac{1}{4c^2}>0$, we can apply Theorem \ref{thm:diffeq} with stopping time $T=\sqrt{\log n}$.

Our plan is to give rapidly increasing lower bounds on the degrees and codegrees as the graph evolves, thus showing that we reach the complete graph in $O(\log\log n)$ rounds. Let us analyze the first round separately.

The initial parameters of the second phase are, as implied by Theorem \ref{thm:diffeq},
\begin{itemize}
\item $X_{u,v}(0) \le 50\log n,$
\item $Z_{u,v}(0) = 16c^4n\log^2 n + O(n \log^{3/2} n) \ge 2 c^4n\log^2 n.$
\end{itemize} 
for any vertices $u$ and $v$.

\begin{LEMMA} \label{lem:stepone}
There is some constant $\gamma>0$, such that the codegree $X_{u,v}(1)\ge \gamma\log^2n$ for every pair of vertices $u,v$ with high probability.
\end{LEMMA}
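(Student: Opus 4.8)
The plan is to show that every pair $u,v$ gains $\Omega(\log^2 n)$ common neighbors in the very first round of the second phase, using the fact that there are already $\Omega(n\log^2 n)$ open $4$-walks from $u$ to $v$ and that each such walk contributes a potential new common neighbor with probability $p^2 = c^2/n$ through its two open triples.

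Here is how I would carry it out. First, fix a pair of vertices $u,v$. Among the $Z_{u,v}(0) \ge 2c^4 n\log^2 n$ open $4$-walks $uww'w''v$, I want to extract a large family in which the ``middle vertices'' $w'$ are pairwise distinct, so that the events ``$w'$ becomes a common neighbor of $u$ and $v$'' are governed by disjoint pairs of triples and can be made independent. Since each fixed vertex $w'$ lies on at most $D_u(0)D_v(0) = O(n\log n)$ such $4$-walks (choose a neighbor of $u$ adjacent to $w'$ in one more step, and similarly for $v$ — or more crudely, $w'$ is determined together with at most codegree-many choices on each side), a greedy selection yields at least $\Omega(\log n)$ distinct middle vertices $w'_1,\dots,w'_k$, each equipped with an open $4$-walk $u\,a_j\,w'_j\,b_j\,v$. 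That is still not quite $\log^2 n$ walks, so instead I would be more careful: a fixed $w'$ lies on at most $X_{u,w'}(0)\cdot D_u(0) \le 50\log n\cdot O(\sqrt{n\log n})$ open $4$-walks of this form, which is $O(\sqrt n \log^{3/2} n)$, hence we can find $k = \Omega(\log^2 n / (\sqrt n\log^{3/2}n)) \cdot \sqrt n = \Omega(\sqrt n \cdot \log^{1/2} n)$ — more than enough — distinct middle vertices. For each selected $w'_j$, the triple $u\,a_j\,w'_j$ and the triple $w'_j\,b_j\,v$ are distinct from the triples used for any other $w'_i$ (they share no common vertex pair in the right position), so whether both get sampled positively in round $1$ is an independent event across $j$, each occurring with probability at least $p^2 = c^2/n$. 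When both are sampled positively, $u w'_j$ and $w'_j v$ are both added, so $w'_j$ becomes a new common neighbor of $u$ and $v$.

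Thus $X_{u,v}(1)$ stochastically dominates a sum of $k = \Omega(\sqrt n)$ independent indicator variables each with success probability $\ge c^2/n$, so it dominates $\Bin(k, c^2/n)$ with mean $\Omega(\sqrt n \cdot 1/n) $ — wait, that is $o(1)$, which is too weak. The resolution is that we should not restrict to distinct middle vertices but rather count the open $4$-walks directly: $X_{u,v}(1)$ dominates (roughly) the number of middle vertices $w'$ such that at least one of its $4$-walks succeeds, and summing over $w'$ the probability that $w'$ is captured is, by inclusion–exclusion, at least (number of $4$-walks through $w'$)$\cdot p^2 \cdot(1 - o(1))$ when this is $o(1)$, so $\Exp[X_{u,v}(1)] \gtrsim Z_{u,v}(0)\cdot p^2 = \Omega(n\log^2 n)\cdot c^2/n = \Omega(\log^2 n)$. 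For concentration I would set this up as a sum of independent contributions over a partition of the $4$-walks into groups sharing the same middle vertex: the group for $w'$ contributes the indicator that some $4$-walk through $w'$ succeeds, these are independent across $w'$, bounded by $1$, and their expectations sum to $\Omega(\log^2 n)$, so a one-sided Chernoff bound (Claim~\ref{lem:chern}) gives $X_{u,v}(1) \ge \gamma\log^2 n$ with probability $1 - e^{-\Omega(\log^2 n)} = 1 - n^{-\omega(1)}$, and a union bound over all $\binom n 2$ pairs finishes the proof with $\gamma$ a suitable constant depending only on $c$.

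The main obstacle is the tension between independence and counting: the raw count $Z_{u,v}(0)$ of open $4$-walks is large, but many walks share middle vertices and edges, so one cannot treat all of them as independent trials. The fix — grouping walks by their middle vertex $w'$ and working with the per-group success indicator — restores independence while losing at most a constant factor, provided one checks that no single $w'$ hogts too many of the $4$-walks; that bound in turn follows from the codegree and degree bounds $X_{u,w'}(0)\le 50\log n$ and $D_u(0) = O(\sqrt{n\log n})$ supplied by Theorem~\ref{thm:diffeq}. A minor secondary point is to confirm that the two open triples $u\,a\,w'$ and $w'\,b\,v$ of a $4$-walk are genuinely distinct as triples (they are, since one contains the pair $\{u,a\}$ and the other does not contain $u$ at all unless $a$ or $b$ coincides with $u$ or $v$, cases that were already excluded in the definition of open $4$-walk or are negligible), so that their samples are independent.
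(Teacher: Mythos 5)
Your final argument is correct and matches the paper's proof in substance: both exploit the identity $Z_{u,v}(0)=\sum_{w}\tilde X_{u,w}(0)\tilde X_{v,w}(0)$ together with the codegree cap $\tilde X\le 50\log n$ to extract $\Omega(\log^2 n)$ expected new common neighbors, group by middle vertex to obtain independence, and finish with Chernoff plus a union bound. The only difference is cosmetic: the paper first pigeonholes out a set of $\Omega(n)$ vertices $w$ with $\tilde X_{u,w},\tilde X_{v,w}\ge b\log n$ so that $X_{u,v}(1)$ dominates an honest binomial, whereas you sum the heterogeneous per-$w'$ capture probabilities directly (which requires the Chernoff/Hoeffding bound for sums of non-identical Bernoullis rather than literally Claim~\ref{lem:chern}), but this is a minor packaging choice, not a different route.
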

\begin{proof}
Fix $u$ and $v$. We expect most of their new common neighbors to be vertices $w$ with open triples to both $u$ and $v$. So if $\tilde X_{p,r}$ denotes the number of open triples $pqr$, then we want many vertices $w$ such that both $\tilde X_{u,w}$ and $\tilde X_{w,v}$ are relatively large.

\begin{CLAIM}
For every pair of vertices $u,v$, there are at least $a\cdot n$ vertices $w$ such that $\tilde X_{u,w}(0),\tilde X_{v,w}(0) \ge b\log n$, where $a=\frac{c^4}{2500}$ and $b=\frac{c^4}{50}$ are positive constants.
\end{CLAIM}
\begin{proof}
Note that an open 4-walk is just a sequence of two open triples, hence
\[
2c^4n\log^2n \le Z_{u,v}(0) = \sum_{w\in V\setminus\{u,v\}} \tilde X_{u,w}(0)\cdot\tilde X_{v,w}(0).
\]
Here each summand is bounded by $(50\log n)^2$, so if fewer than $an$ vertices $w$ satisfy $c^4\log^2 n \le \tilde X_{u,w}(0)\cdot \tilde X_{v,w}(0)$, then the 
right hand sum above is less than  $c^4\log^2 n \cdot n +(50\log n)^2 \cdot an =2c^4 n \log^2 n$, a contradiction.
At the same time, the bound on the codegrees implies that each $w$ with $c^4\log^2 n \le \tilde X_{u,w}(0)\cdot \tilde X_{v,w}(0)$ satisfies our requirements.
\end{proof}

Now if some $w$ shares at least $b\log n$ open triples with both $u$ and $v$, then it becomes a new common neighbor of them after the first round with probability at least $\left(1-\left(1-\frac{c}{\sqrt{n}}\right)^{b\log n}\right)^2 \ge \left(\frac{cb\log n}{2\sqrt{n}}\right)^2$ (here and later in this section we use that 
$(1-\alpha)^{\beta} \le 1-\alpha \beta/2$ for all $\alpha \beta \le 1$). These events are independent for different $w$'s, hence $X_{u,v}$ is bounded from below by the Binomial random variable $\mbox{Bin}\left(an,\frac{c^2b^2\log^2 n}{4n} \right)$. Then by Lemma~\ref{lem:chern}, the probability that $X_{u,v}(1)$ is smaller than $\gamma \log^2 n$ is $e^{-\Omega(\log^2 n)}$ for a sufficiently small $\gamma$. A union bound over all pairs of vertices finishes the proof.
\end{proof}

To make our life easier, we consider a slightly different second phase from this point on. Instead of sampling open triples with success probability $p$, we will consider a sprinkling process, and sample \emph{all} triples with success probability $\frac{4}{\sqrt{n\log n}}$ in each round (starting from round 2). This means that some triples will have a higher than $p$ chance to exist, but as long as the number of rounds $m$ is $O(\log\log n)$, the effect is negligible: each triple is still sampled with probability at most $\frac{c+o(1)}{\sqrt{n}}$. Formally we can say that we are proving the result for any constant $c'>c$.

\medskip
To give a lower bound on the codegrees in $G_{i+1}$, we define the following sequence:
\[ x_i=\gamma^{2^{i-1}}\log^{2^{i-1}+1}n, \quad i=1,\ldots,m \]
with $x_{m+1}=\frac {n}{10}$,
where we choose $m = O(\log\log n)$ to be smallest possible such that $x_m\ge\frac 14\sqrt{n \log n}$. Let us also set $p_i=1-\left(1-\frac{4}{\sqrt{n\log n}}\right)^{x_{i-1}}$.

\begin{LEMMA}\label{lem:boundx}
With high probability 
\[ X_{u,v}(i) \ge  x_i \]
for all $1\le i \le m+1$ and all pairs of vertices $u$ and $v$.
\end{LEMMA}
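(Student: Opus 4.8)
The plan is an induction on $i$, the base case $i=1$ being Lemma~\ref{lem:stepone}. For the inductive step we condition on the (high-probability) event that $X_{u',v'}(i)\ge x_i$ for \emph{every} pair $u',v'$, fix one pair $u,v$, and bound the probability that $X_{u,v}(i+1)<x_{i+1}$; a union bound over the $\binom n2$ pairs and over the $m=O(\log\log n)$ rounds then completes the proof.

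The heart of the matter is that in a sprinkling round the ``new neighbor'' events are genuinely independent. If $w\notin N_{G_i}(u)$, then the edge $uw$ appears in $G_{i+1}$ exactly when some triple $\{u,z,w\}$ with $z\in N_{G_i}(u)\cap N_{G_i}(w)$ is sampled; there are $X_{u,w}(i)\ge x_i$ such triples, so
\[
\Prb[uw\in G_{i+1}]\ \ge\ 1-\Big(1-\tfrac{4}{\sqrt{n\log n}}\Big)^{x_i}\ =\ p_{i+1}.
\]
Crucially, a triple $\{u,a,b\}$ can be relevant to ``$uw$ gets added'' for \emph{at most one} $w\in\{a,b\}$: relevance for $w=a$ forces $ub\in G_i$ and $ua\notin G_i$, while relevance for $w=b$ forces the reverse. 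Hence, over $w\notin N_{G_i}(u)$, the events $\{uw\in G_{i+1}\}$ depend on pairwise disjoint sets of triples, and the same disjointness holds between the $u$-side and the $v$-side once $u,v$ themselves are excluded. So for each $w\notin N_{G_i}(u)\cup N_{G_i}(v)\cup\{u,v\}$ the event $E_w=\{w\text{ is a common neighbor of }u,v\text{ in }G_{i+1}\}$ has probability at least $p_{i+1}^2$, and the $E_w$ are mutually independent. Consequently $X_{u,v}(i+1)$ stochastically dominates $\Bin\big(n-\card{N_{G_i}(u)\cup N_{G_i}(v)}-2,\ p_{i+1}^2\big)$.

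It remains to see this binomial sits well above $x_{i+1}$. The definition of the $x_i$ gives the recursion $x_{i+1}=x_i^2/\log n$, and $1-(1-q)^{x_i}\ge\min\{qx_i/2,\,1-e^{-1}\}$ with $q=\tfrac{4}{\sqrt{n\log n}}$ yields $p_{i+1}^2\,n\ge 4x_{i+1}\big(1-o(1)\big)$ --- this constant surplus is exactly what the $4$ in the sprinkling probability buys (in the final step $i=m$ one has $qx_m\ge 1$, so $p_{m+1}\ge 1-e^{-1}$ and the binomial again has mean a safe constant factor above $x_{m+1}=n/10$). Thus, provided the candidate set has size $(1-o(1))n$, the binomial has mean at least $3x_{i+1}$, and Claim~\ref{lem:chern} gives $\Prb[X_{u,v}(i+1)<x_{i+1}]\le e^{-\Omega(x_{i+1})}\le e^{-\Omega(\log^3 n)}$, comfortably beating the union bound.

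The step I expect to be the main obstacle is verifying that the candidate set $V\setminus\big(N_{G_i}(u)\cup N_{G_i}(v)\cup\{u,v\}\big)$ really has size $(1-o(1))n$ for all $i\le m$ --- that is, that the degrees do not outrun the codegrees. For this I would strengthen the induction to also carry an upper bound on degrees: one checks that $D_v(i+1)-D_v(i)$ is stochastically dominated by $\Bin\big(\sum_{z\in N_{G_i}(v)}D_z(i),\ q\big)$, whose mean is at most $q\,D_v(i)D_{\max}(i)$, so a bound of the form $D_v(i)\le C_i\sqrt{n\log n}$ with $C_{i+1}=O(C_i^2)$ propagates (Chernoff absorbing the fluctuations). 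Since $x_m<n/16$ forces $m\lesssim\log_2\log n$, one has $\log_2 C_m=O(2^m)=o(\log n)$, hence $C_m=n^{o(1)}$ and $D_v(i)=o(n)$ throughout $i\le m$, which is more than enough. Matching these two doubly-exponential growth rates --- making sure the degrees stay $o(n)$ until the codegrees reach $n/10$ --- is the only genuinely delicate bookkeeping; everything else is the independence observation followed by a Chernoff bound.
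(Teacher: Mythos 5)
Your proof is correct and rests on the same key observation as the paper's: since every triple has at most one missing edge, the collections of triples that could close distinct non-edges are pairwise disjoint, so in a sprinkling round the addition events are mutually independent. The difference is in how that observation gets packaged. You restrict attention to vertices $w\notin N_{G_i}(u)\cup N_{G_i}(v)\cup\{u,v\}$, show that $X_{u,v}(i+1)$ dominates a binomial with $n-\card{N_{G_i}(u)\cup N_{G_i}(v)}-2$ trials, and then must do the ``genuinely delicate bookkeeping'' of showing that degrees stay $o(n)$, via a parallel induction on a degree upper bound. The paper avoids that entirely: it observes that the independence argument implies $G(n,p_i)\subs G_i$ as a coupling, where $G(n,p_i)$ is the Erd\H os--R\'enyi graph. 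Indeed each missing edge is added with probability at least $p_i$, independently across missing edges, while edges already present are trivially retained with probability $1\ge p_i$. This treats all $n-2$ candidate vertices $w$ uniformly (no need to exclude current neighbors), so the codegree of any pair in $G_i$ stochastically dominates $\Bin(n-2,p_i^2)$ outright, and the degree bound --- the step you flag as the main obstacle --- simply never arises. Your degree argument does go through (the recursion $\log C_{i+1}=O(\log C_i)+O(1)$ together with $2^m=O(\log n/\log\log n)$ gives $D_v(i)=n^{1/2+o(1)}=o(n)$), so the proposal is valid, but it is doing extra work that the coupling trick renders unnecessary.
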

\begin{proof}
Lemma~\ref{lem:stepone} shows that the lower bound on the codegrees holds for $i=1$, so assume $2\le i$. We condition on the event that the statement holds for $i-1$ and bound the probability that it fails for $i$.

We claim that under these conditions $G(n,p_i)$ is a subgraph of $G_i$. To see this, observe that if an edge $uv$ is missing from $G_{i-1}$, then it has $X_{u,v}(i-1)\ge x_{i-1}$ independent chances of probability $4/\sqrt{n\log n}$ of being added in the $i$'th round. Moreover, these events are independent for the different non-edges, as the triples are sampled independently, and each triple has at most one missing edge. This means that missing edges are added independently with probability at least $p_i$ while existing edges are kept in the graph, thus indeed $G(n,p_i)\subs G_i$.

We intend to use the Chernoff bound to show that all the codegrees in $G(n,p_i)$, and thus also in $G_i$, exceed $x_i$. For this, observe that the codegree of any fixed pair of vertices in $G(n,p_i)$ is a binomial random variable $R_i\sim \Bin(n-2,p_i^2)$. A straightforward calculation gives $\Exp[R_i] > 2x_i$.
 
Indeed, for $2\le i \le m$ we have
\[ (n-2) \left( 1- \left(1-\frac{4}{\sqrt{n\log n}}\right)^{x_{i-1}}\right)^2 \ge \frac{(n-2)\cdot 4x_{i-1}^2 }{n\log n}  >  \frac{2x_{i-1}^2 }{ \log n}=2x_i.
\]
whereas for $i=m+1$ (using $x_m\ge \frac 14\sqrt{n \log n}$),
\[ (n-2) \left( 1- \left(1-\frac{4}{\sqrt{n\log n}}\right)^{x_m}\right)^2 \ge (n-2)(1-1/e)^2> 2x_{m+1},
\]
Thus, as $x_i \ge \delta\log^2 n$, Claim~\ref{lem:chern} shows that $\Prb [R_i < x_i ] = e^{-\Omega( \log^2 n)}$. Now taking the union bound over all vertex pairs and over all $i$ finishes the proof.
\end{proof}

\begin{proof}[Proof of Theorem~\ref{thm:main}, part 1]
We claim that $G_{m+2}$ is the complete graph. Indeed, Lemma~\ref{lem:boundx} shows that whp all the codegrees in $G_{m+1}$ are linear, so the probability that a fixed edge is missing from $G_{m+2}$ is at most $(1-4/\sqrt{n\log n})^{\Omega(n)}=e^{-\Omega(\sqrt{n/\log n})}$. A union bound over all pairs of vertices then completes the proof.
\end{proof}

\section{Concluding remarks} \label{sec:last}

Probably the most natural question that one can ask is the following. What happens if the process starts with some other tree, and not the star? Intuitively it seems that we are in a worse situation as there are fewer open triples to start with. We would therefore expect that if $p\le \frac{1-\eps}{2\sqrt{n}}$, then starting with any fixed tree, the triadic process fails to propagate whp. In fact, we believe that whp this holds for all trees simultaneously.

Using the topology language, this is equivalent to saying that $p=\frac{1}{2\sqrt{n}}$ is the threshold probability for a random 2-complex to contain a collapsible hypertree (the upper bound comes from Corollary~\ref{cor:top}). We must note that a complex can have trivial fundamental group without actually containing a collapsible hypertree. A yet stronger question would be to ask for a lower bound matching the bound on the threshold in  Corollary~\ref{cor:top} for being simply connected.

Going in a different direction, it would also be interesting to study similar processes that are perhaps more meaningful from the social networks point of view. For example, a triadic process where vertices are discouraged to reach high degrees could be a more realistic model.

\bigskip

\noindent{\bf Acknowledgement.}\, This research was done while the second author was a visiting student at ETH Zurich. He would like to thank the Mathematics Department of ETH for the hospitality and for creating a stimulating research environment.

\medskip
\noindent{\bf Note added in proof.}\, After this paper was written, we learned that Gundert and Wagner \cite{GW15} independently improved the bound in \cite{BHK11}, and showed that $Y_2(n,p)$ is whp simply connected for $p=\frac{c}{\sqrt{n}}$ where $c$ is a sufficiently large constant.

\end{document}